\documentclass[11pt]{article}

\usepackage[a4paper,margin=29mm]{geometry}
\usepackage{amsmath,amssymb,amsthm,mathtools}
\usepackage{aliascnt}
\usepackage{microtype}
\usepackage{placeins}
\usepackage{tikz}
\usetikzlibrary{arrows.meta,calc,positioning}
\usepackage[hidelinks]{hyperref}
\usepackage[nameinlink,capitalise,noabbrev]{cleveref}

\hypersetup{
  pdftitle={Local decisions, diffusive influence, and lower bounds for graphical balanced allocation},
  pdfauthor={Obinna Okechukwu},
  pdfsubject={Graphical balanced allocation and lower bounds for local rules},
  pdfkeywords={graphical balanced allocation, power of two choices, load balancing, local algorithms, coupling, martingales, stochastic growth}
}

\newtheorem{theorem}{Theorem}[section]
\newaliascnt{proposition}{theorem}
\newtheorem{proposition}[proposition]{Proposition}
\aliascntresetthe{proposition}
\newaliascnt{lemma}{theorem}
\newtheorem{lemma}[lemma]{Lemma}
\aliascntresetthe{lemma}
\newaliascnt{corollary}{theorem}
\newtheorem{corollary}[corollary]{Corollary}
\aliascntresetthe{corollary}
\theoremstyle{remark}
\newaliascnt{remark}{theorem}
\newtheorem{remark}[remark]{Remark}
\aliascntresetthe{remark}
\newaliascnt{problem}{theorem}
\newtheorem{problem}[problem]{Problem}
\aliascntresetthe{problem}
\crefname{theorem}{Theorem}{Theorems}
\crefname{proposition}{Proposition}{Propositions}
\crefname{lemma}{Lemma}{Lemmas}
\crefname{corollary}{Corollary}{Corollaries}
\crefname{remark}{Remark}{Remarks}
\crefname{problem}{Problem}{Problems}

\newcommand{\E}{\mathbb{E}}
\newcommand{\Pp}{\mathbb{P}}
\newcommand{\Z}{\mathbb{Z}}
\newcommand{\one}{\mathbf{1}}
\newcommand{\dist}{\operatorname{dist}}
\newcommand{\Gap}{\operatorname{Gap}}
\newcommand{\Var}{\operatorname{Var}}
\newcommand{\clip}{\operatorname{clip}}
\newcommand{\cM}{\mathcal{M}}
\newcommand{\ip}[2]{\langle #1,#2\rangle}
\newcommand{\norm}[1]{\lVert #1\rVert}
\newcommand{\doi}[1]{\href{https://doi.org/#1}{doi:#1}}

\title{Local decisions, diffusive influence, and lower bounds for graphical balanced allocation}
\author{Obinna Okechukwu}
\date{}

\begin{document}
\maketitle

\begin{abstract}
In graphical two-choice allocation, each arriving ball is assigned to one endpoint of a random edge. We study rules whose decision is a monotone function of the two endpoint loads, allowing edge-dependent thresholds and fresh randomization. Such a rule has an exact unit-discrepancy coupling: adding one ball to the initial state produces one tagged discrepancy at every later time. We represent the tag by conditional-expectation projections on the marked edge space and obtain diffusive displacement bounds. A transport-volume inequality then converts slow propagation of influence into lower bounds for the load gap. On the cycle with $n$ vertices, from every initial distribution and at every physical time $t\ge 1/n$, the expected gap is at least a constant times $\min\{\sqrt n,t^{1/4}\}$, and the gap exceeds this scale with probability at least $1/8$. After exactly $k\ge1$ allocations, the corresponding scale is $\min\{\sqrt n,(k/n)^{1/4}\}$. No stationarity, symmetry, recurrence, or moment assumption is used. The general inequality also yields a lower bound of order $\sqrt{L/K+\log K}$ on the $L\times K$ discrete torus $C_L\square C_K$. These results separate endpoint-local rules from global-information strategies that achieve polylogarithmic gaps on cycles.
\end{abstract}

\noindent\textbf{Keywords.}
Graphical balanced allocation; power of two choices; local algorithms; coupling; martingales; stochastic growth.

\medskip
\noindent\textbf{2020 Mathematics Subject Classification.}
Primary 60K35; Secondary 60J27, 68W20, 60G44, 05C81.

\section{Introduction}

The graphical two-choice process is an online load-balancing model on a fixed graph. Each arriving ball samples an edge and must be placed at one of its endpoints. The greedy rule places the ball at the less loaded endpoint, breaking ties uniformly; with the orientation convention used below, it is the choice $p_e(d)=\one_{\{d<0\}}+\tfrac12\one_{\{d=0\}}$. We study fixed rules whose decision is a monotone function of the two endpoint loads, allowing edge-dependent thresholds and fresh randomization. We call these \emph{endpoint-local monotone rules}. Our main result shows that this information restriction forces a square-root gap on the cycle: influence propagates only diffusively, so a macroscopic imbalance persists long enough to force a gap of order $\sqrt n$.

Physical time is normalized so that every edge rings at rate one; hence the total event rate on the $n$-cycle is $n$. For a load vector $x$, write
\[
 \Gap(x)=\max_v x_v-\min_v x_v.
\]

\begin{theorem}[Continuous-time cycle lower bound]\label{thm:intro-cycle}
There is an absolute constant $c>0$ with the following property. Let $n\ge3$, let an endpoint-local monotone rule run on $C_n$, and let the initial integer-valued load vector have an arbitrary distribution independent of the future clocks and random marks. Then, for every $t\ge1/n$,
\[
 \E\Gap(X_t)\ge c\min\{\sqrt n,t^{1/4}\},
\]
and
\[
 \Pp\!\left(\Gap(X_t)\ge c\min\{\sqrt n,t^{1/4}\}\right)\ge\frac18.
\]
One may take $c=(1024\pi\sqrt3)^{-1}$.
\end{theorem}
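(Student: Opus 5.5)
The argument rests on the structure sketched in the abstract: a unit-discrepancy coupling, diffusive displacement of the tag, and a transport-volume inequality. I would prove the theorem by combining these three ingredients, which the paper develops in later sections but whose content is announced in the introduction.

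\emph{Step 1: the coupling.} Fix the rule and couple two copies of the process, one started from $x$ and one from $x+\one_v$, using the same edge clocks and the same uniform marks. Monotonicity of $p_e$ in the endpoint difference gives $X'_t-X_t=\one_{V_t}$ for a single tagged vertex $V_t$ at every time. On a ring of edge $e=\{u,w\}$, the tag moves across $e$ only when the ball placement differs between the copies. This happens with conditional probability at most one per ring, and the jump is to a neighbour. So $V_t$ is a nearest-neighbour walk on $C_n$ whose jump rate across each incident edge is at most one. Summing over all starting vertices gives the following consequence. The expected load at $u$ at time $t$, viewed as a function of the initial configuration, has discrete gradient in $x_v$ equal to $\Pp(V_t=u\mid V_0=v)$. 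The time-$t$ expected profile is therefore obtained from the initial one by a stochastic ``transport kernel'' $K_t(v,u)$, and the transport is mass-preserving.

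\emph{Step 2: diffusive displacement.} The tag's walk on the lift $\Z$ is not a martingale in general, since drift can arise from thresholds. The intended route is the conditional-expectation projection on marked edge space: decompose the displacement as a sum over edge rings of increments with bounded conditional variance. I would try to show $\E|\tilde V_t-\tilde V_0|^2\le C t$ by writing the displacement as a martingale plus a compensator. The compensator must then be controlled by a telescoping or antisymmetry argument showing it has no systematic sign. I expect this to be the main obstacle. If it fails, I would fall back on bounding the total number of tag moves by a Poisson$(2t)$ variable and using a reflection/symmetry comparison. The projection machinery is exactly what is needed to get $\sqrt t$ rather than $t$.

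\emph{Step 3: transport-volume to gap.} Suppose the gap at time $t$ is at most $g$. Start from a deliberately bad configuration: place extra mass $m$ on an arc of length $\ell\approx\min\{n/2,\sqrt t\}$. By Step 2, the influence of each added ball stays within distance $O(\sqrt t)$ with probability at least $1/2$. Hence a positive fraction of the extra mass remains within a window of length $O(\ell)$, which forces the window average to exceed the global average by about $m/\ell$. The arbitrary initial distribution is handled by comparing the copies started from $X_0$ and from $X_0$ plus the bump, adding balls one at a time via Step 1. By linearity of the kernel, at least one of the two copies must have $\E\Gap\gtrsim m/\ell$.

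The target scale requires a supply of extra mass without assuming stationarity. I would use the allocations themselves as randomness: over time $t$ each edge receives about $t$ balls, and their local fluctuations over an arc of length $\ell$ are of order $\sqrt{t\ell}$. Transported over distance at most $\sqrt t$, these produce density fluctuations of order $\sqrt{t\ell}/\ell$ at spatial scale $\ell$. Optimizing with $\ell\sim\sqrt t$ gives $t^{1/4}$, capped by $\sqrt n$ when $\ell$ reaches $n$. This gives the expectation bound. For the probability bound, combine the pathwise bound $\Gap\le$ (a quantity whose second moment is comparable) with a Paley--Zygmund step, and track the constants, which yields $1/8$ and $c=(1024\pi\sqrt3)^{-1}$.
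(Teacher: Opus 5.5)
Your Step~1 is fine, but there are genuine gaps in Steps~2 and~3, and the probability bound does not follow as you describe.

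\emph{Step 2.} You aim for $\E|\tilde V_t-\tilde V_0|^2\le Ct$ for a tag started at a fixed vertex $v$, and such a tag has real, profile-dependent drift. For the deterministic rule $p_e(d)=\one_{\{d\le0\}}$ on a flat profile, the tag jumps right at rate one and never left. Your martingale-plus-compensator plan has no mechanism that forces the compensator to cancel. The fallback of counting tag moves gives only ballistic displacement, which makes the usable horizon $T\asymp R$ and yields an $O(1)$ bound.

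The missing idea is the Palm initialization: start the tag at $w$ with probability $\lambda_w(x)/m$. Then, conditionally on the load path, the tag is the selected endpoint of an auxiliary mark on $E\times(0,1)$. That mark starts from its invariant law $\rho$ and moves by $\rho$-preserving conditional-expectation projections. Stationarity is what makes the product-of-projections bound $\E\norm{f(B_k)-f(B_0)}^2\le4\sum_j\norm{(I-Q_j)f}^2$ available. With $f=e^{2\pi iz/n}$ it gives $\E\dist(J_s,J_0)^2\le2\pi^2s+2$. This controls only Palm-averaged starts, so the conversion to a gap must also carry Palm weights, which your Step~3 does not.

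\emph{Step 3.} The initial law is prescribed, so you cannot add a bump. Knowing that one of two copies has a large expected gap says nothing about the given process. The response to several added balls is also not a fixed linear kernel, since each tag moves in a profile that depends on the balls added before it.

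Your fluctuation paragraph (arrival noise $\sqrt{t\ell}$ transported over distance $\sqrt t$) is the Edwards--Wilkinson heuristic, not an argument. The rule chooses endpoints adaptively precisely to cancel such noise, so you need a device showing that noise injected at time $t-s$ survives to time $t$. The paper's device is the Doob martingale $P_{t-u}f(X_u)$, whose bracket gives $\int_0^t\E\sum_v\lambda_v(X_{t-s})|D_vP_sf(X_{t-s})|^2\,ds\le\Var f(X_t)$. It is applied to the clipped contrasts $f_i(x)=\clip_{[-1,1]}((x_i-x_{i+r})/M)$, whose variance is at most one with no moment assumptions. On $\{\Gap\le M-1\}$ one has $D_if_i=1/M$. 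The Palm identity $\sum_v\lambda_v(x)\E_{x,v}F(X_s,J_s)=\E_x\sum_w\lambda_w(X_s)F(X_s,w)$ turns the averaged response into terminal rates summing to $m$. Restricting starts to $B(i,R)$ costs only two displacement tails. Cauchy--Schwarz over a ball of size $2R+1$ then gives lag-$s$ energy of order $1/(M^2R)$ for $s\lesssim R^2$, hence $M^2\gtrsim R\asymp\min\{n,\sqrt t\}$.

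\emph{Probability bound.} Paley--Zygmund cannot give it, because it needs an upper bound on $\E\Gap(X_t)^2$ that is unavailable; the initial gap need not even be integrable. In the paper the gap enters only through the tail $p=\Pp(\Gap(X_t)>M-1)$. The $1/8$ comes from a contradiction at $M=\sqrt r/(128\pi)$, and the expectation bound from Markov's inequality with $M=8\E\Gap(X_t)+1$. Small scales are handled by the phase bound $\Pp(\Gap(X_t)\ge1)\ge e^{-1}$.
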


The theorem is transient and uniform over the initial state. In particular, it applies from the flat configuration, and the saturated lower bound holds at every $t\ge n^2$. If the process modulo common translations admits an invariant probability measure $\pi$, then
\[
 \E_\pi\Gap\ge c\sqrt n,
 \qquad
 \pi(\Gap\ge c\sqrt n)\ge\frac18.
\]
No uniqueness or integrability property of $\pi$ is needed.

There is also a deterministic-event version. Let $X^{[k]}$ denote the load vector after exactly $k$ allocations.

\begin{theorem}[Cycle lower bound at fixed allocation counts]\label{thm:intro-discrete}
There is an absolute constant $c'>0$ with the following property. Let $n\ge3$, let an endpoint-local monotone rule run on $C_n$, and let the initial integer-valued load vector have an arbitrary distribution independent of the future marks. Then, for every $k\ge1$,
\[
 \E\Gap(X^{[k]})\ge c'\min\{\sqrt n,(k/n)^{1/4}\},
\]
and
\[
 \Pp\!\left(\Gap(X^{[k]})\ge c'\min\{\sqrt n,(k/n)^{1/4}\}\right)\ge\frac18.
\]
One may take $c'=(16384\pi\sqrt3)^{-1}$.
\end{theorem}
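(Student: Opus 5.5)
The plan is to derive \cref{thm:intro-discrete} from \cref{thm:intro-cycle} by a Poissonization/de-Poissonization comparison, rather than rerunning the whole argument. In continuous time the allocations arrive as a rate-$n$ Poisson process $N(t)$, independent of the sequence of edge choices and marks. The embedded jump chain is therefore exactly the discrete process: $X_t=X^{[N(t)]}$, with $N(t)$ independent of the sequence $(X^{[j]})_{j\ge0}$. So I would fix $k\ge1$, choose a time $t$ with $nt$ comparable to $k$, and transfer the lower bound at time $t$ to the deterministic index $k$.

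Transferring along a random index is not straightforward, because the gap is not monotone in $j$. A cleaner route is to bypass the continuous-time statement and apply the underlying inequalities directly at a deterministic index. The ingredients behind \cref{thm:intro-cycle} are the unit-discrepancy coupling, the diffusive displacement bound for the tag, and the transport-volume inequality. The coupling is pathwise and works equally well step by step. The displacement bound will be a martingale/projection estimate: the tag's position has variance at most a constant times the number of times an edge at the tag's current vertex was chosen. In discrete time each step chooses such an edge with probability $2/n$, so after $k$ steps the variance is $O(k/n)$. This matches time $t=k/n$ in the continuous model, with only constant-factor changes.

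The key steps, in order, are as follows.
\begin{enumerate}
\item Restate the displacement bound in discrete time as $\E[\dist(\text{tag}_k,\text{start})^2]\le C\,k/n$. The tag moves only when an incident edge is sampled and the rule sends the extra ball across. The compensator of the squared displacement is controlled by the conditional-expectation projections exactly as before, and a per-step bound replaces the per-unit-time bound.
\item Apply the transport-volume inequality at index $k$. Start from the given initial law and from that law plus one ball at each of two antipodal or arc regions. Slow propagation then forces a macroscopic imbalance of order $\min\{\sqrt n,(k/n)^{1/4}\}$ in expectation.
\item Upgrade to the probability statement with the same Paley--Zygmund-type or second-moment-free device used for \cref{thm:intro-cycle}. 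That device gives $1/8$ from an averaging argument over a pair of symmetric test functionals, and it does not depend on time being continuous.
\end{enumerate}

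The constant worsens by a factor of $16$ relative to $c$. This accounts for the loss from replacing the rate-$n$ clock by deterministic counts (a factor $2$ in the variance bound) and from the fourth-root scaling and discretization, for example when handling small $k<n$, where $(k/n)^{1/4}<1$ and the bound must still be justified.

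For small $k$ I would use a direct argument. Any integer configuration with total load not divisible by $n$ has gap at least $1$. When the total is divisible by $n$ and the configuration is flat, one more ball already creates gap $1$. So for $k\ge1$ I would check that $\Pp(\Gap(X^{[k]})\ge1)$ is bounded away from zero. Since $c'\min\{\cdot\}<1$, gap $\ge1$ suffices there. Flat configurations can recur, however, so I would instead invoke the general bound, which already covers this case.

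The main obstacle I expect is the first step: checking that the projection-based displacement estimate is genuinely per-event and not per-unit-time. If the paper's proof uses continuous-time generator identities essentially, I would fall back on a conditioning argument. Conditioning on $N(t)=k$ gives the process at index $k$ exactly, so $\E\Gap(X_t)=\sum_j \Pp(N(t)=j)\,\E\Gap(X^{[j]})$, which is an average and not a pointwise bound. In that case I would pair a subsequence argument with the coupling: the gap changes by at most $1$ per step. Therefore $|\E\Gap(X^{[j]})-\E\Gap(X^{[k]})|\le |j-k|$, and choosing $t=k/(2n)$ with Poisson concentration $|N(t)-nt|\lesssim\sqrt{k}$ would suffice only when $\sqrt k\ll (k/n)^{1/4}$. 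This fails in general, which is why I would prefer the direct discrete rerun above.
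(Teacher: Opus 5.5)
Rerunning the argument at a deterministic index, instead of de-Poissonizing, is the right decision and is what the paper does. You have put the difficulty in the wrong place, though. The displacement estimate needs no adaptation: the projection bound in \cref{prop:cycle-diffusion} is already stated per event, $\E\dist(J_h,J_0)^2\le 2\pi^2h/n+2$.

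\textbf{Step 2 does not transfer.} The transport-volume inequality rests on \cref{lem:variance-budget}, a continuous-time identity in which the predictable bracket \emph{equals} the response energy. In event time this fails. For the Doob martingale $\mathsf P^{k+1-j}f_i(X^{[j]})$, the increment leaving $X^{[k-h]}=x$ has conditional variance
\[
 \frac{\gamma_{i,h}(x)}{m}-\bigl|(\mathsf P-I)\mathsf P^hf_i(x)\bigr|^2,
 \qquad
 \gamma_{i,h}(x)=\sum_v\lambda_v(x)|D_v\mathsf P^hf_i(x)|^2 .
\]
So a lower bound on response energy no longer lower-bounds the budget by itself. You must bound the mean increment and absorb its square into the averaged energy bound $\frac1{mN}\sum_i\E\gamma_{i,h}\ge 1/(4n(2R+1)M^2)$. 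By the Palm link only $i$ and $\psi(i)$ contribute to the mean increment, which gives at most $2\Delta/(mM)=4/(nM)$ on $C_n$. Absorption needs the protected arcs to be a small fraction of the cycle: the paper requires $V_d(R)\le m^2/(32N\Delta^2)$, i.e.\ $2R+1\le n/128$. Your plan reuses the continuous-time parameters, with protected radius about $r/3$ and $r$ up to $n/2$. There the correction bound $16/(n^2M^2)$ exceeds the energy bound, which is about $3/(4n^2M^2)$, so the argument does not close at the saturated scale. The repair has four parts:
\begin{itemize}
\item take $R=\lfloor\min\{n/512,\sqrt{k/n}\}\rfloor$;
\item pair each vertex with its antipode;
\item sum over $H=\lfloor nR^2/(64\pi^2)\rfloor$ lags;
\item use terminal index $k+1$, so that each lag-$h$ Palm base ends at $X^{[k]}$, where the gap tail is measured.
\end{itemize}

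\textbf{Step 3 does not work as written.} An expectation lower bound cannot be upgraded to a probability lower bound without an upper moment bound on the gap, and none is available. The inequality has to be used in quantile form. It is stated for $p=\Pp(\Gap(X^{[k]})>M-1)$, so $p\le1/8$ forces $M\ge Q$. Taking $M=Q/2$ gives the probability claim by contradiction. Taking $M=8\E\Gap(X^{[k]})+1$ together with Markov's inequality gives the expectation claim. The $1/8$ is simply the threshold that keeps $1-p-2q_h\ge1/2$.

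\textbf{The small-parameter range is not handled.} Contrary to your fallback, the general bound does not cover the complementary range ($R<8$ or $Q<4$). The phase lemma \cref{lem:phase} is also unavailable, because it uses the Poisson randomness of the event count; at a fixed index $k$ the dynamics add no randomness to the total load modulo $n$. The missing observation is a last-step argument, \cref{lem:discrete-small}: condition on $X^{[k-1]}$.
\begin{itemize}
\item A flat profile has no flat successor.
\item A nonflat profile becomes flat only if a unique vertex is one unit below all others and receives the ball, which has probability at most $2/n$.
\end{itemize}
Hence $\Pp(\Gap(X^{[k]})\ge1)\ge1-2/n\ge1/3$. This proves both assertions in that range, because the threshold there is below $1/3$. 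It also settles your worry about flat profiles recurring, without tracking the path.
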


Thus the square-root gap holds after every $k\ge n^3$. The general transport-volume inequality also yields
\[
 \E\Gap(X_t)\gtrsim \sqrt{L/K+\log K}
 \qquad (3\le K\le L,\ t\ge L^2)
\]
on the rectangular torus $C_L\square C_K$, with the same lower scale holding with probability at least $1/8$. No attempt is made to optimize the numerical constants.

\paragraph{Related work.}
The classical two-choice process reduces the maximum load from order $\log n/\log\log n$ to order $\log\log n$ when $n$ balls are allocated to $n$ bins \cite{AzarBroderKarlinUpfal1999}. Kenthapadi and Panigrahy introduced the graphical version \cite{KenthapadiPanigrahy2006}. Peres, Talwar, and Wieder bounded the heavily loaded graphical process in terms of edge expansion \cite{PeresTalwarWieder2015}; on a cycle, their estimate gives $O(n\log n)$. For the synchronous equilibrium process on a regular graph, Olesker-Taylor, Sauerwald, and Zanetti proved that the edge-average of the expected load difference is at most two \cite{OleskerTaylorSauerwaldZanetti2026}, which gives the current $O(n)$ equilibrium upper bound on the cycle.

Bansal and Feldheim showed that more global information changes the picture: their strategy compares average loads on fixed vertex sets and obtains a polylogarithmic gap on cycles and tori \cite{BansalFeldheim}. They also proved a strategy-independent $\Omega(\log n)$ lower bound for every graph, together with an $\Omega(d/k)$ term, where $k$ denotes the edge-connectivity of the $d$-regular graph. On the cycle the universal lower bound is only $\Omega(\log n)$, so no strategy-independent argument can yield the square-root scale. Their paper identifies polynomial behavior for greedy allocation on the cycle as a natural conjecture and reports simulations consistent with $\sqrt n$-scale behavior. The locality separation here is about information, not the number of bins modified: both procedures place one ball at one endpoint, but only the endpoint-local rule is covered by our lower bound.

Restrictions on available information have also been studied in complete-graph models, including binary load queries \cite{LosSauerwald2022} and thinning \cite{FeldheimGurelGurevich2021}. In the lightly loaded i.i.d.\ regime, Bansal, Prabhu, Singla, and Sundaram showed that greedy can lose a nearly logarithmic factor against the offline optimum on mildly irregular base graphs, while a decomposition-based threshold rule using knowledge of the base graph is $O(\log\log n)$-competitive \cite{BansalPrabhuSinglaSundaram2026}. Our results concern the heavily loaded regime, where the limitation of rules that decide from the two endpoint loads is geometric.

The dynamic averaging process studied by Alistarh, Nadiradze, and Sabour is related but different: after load is introduced, the two endpoint loads are averaged. They proved an $O(\sqrt n\log n)$ expected upper bound on cycles. Their Theorem~2 proves $\lim_{t\to\infty}\E[\Gap(t)^2]=\Omega(n\E[W^2])$, where $W$ is the arriving load, and its proof uses the $\lfloor n/2\rfloor$-hop potential \cite[Theorem~2]{AlistarhNadiradzeSabour2022}. Kraizberg subsequently proved an $O_d(\sqrt n)$ expected upper bound on every $d$-regular graph, the sharp $O(\log n)$ upper bound on the two-dimensional torus, and a matching cycle lower bound when the arriving loads are bounded away from zero \cite{Kraizberg2026}. These averaging results do not imply a lower bound for graphical allocation because the update maps and their responses are different.

The exponents in \cref{thm:intro-cycle,thm:intro-discrete} agree with the one-dimensional Edwards--Wilkinson scaling prediction: roughness grows like time to the power $1/4$, saturates at spatial scale to the power $1/2$, and has dynamic exponent $2$ \cite{EdwardsWilkinson1982,FamilyVicsek1985,Family1986}. Family's random-deposition-with-surface-relaxation model is a standard discrete representative of that universality class. We use this comparison only as motivation; the endpoint-local allocation processes considered here are neither assumed nor proved to belong to an Edwards--Wilkinson universality class.

\paragraph{Proof overview.}
Influence spreads diffusively. If the gap is at most $M$ with probability at least $7/8$, then a clipped contrast at spatial scale $r$ has derivative of order $1/M$ on most terminal configurations. Its positive and negative endpoint responses remain separated for time of order $r^2$, and at each lag Cauchy--Schwarz forces response energy of order $1/(M^2r)$. Since the clipped test has variance at most one, integrating this cost over $r^2$ lags gives $M^2\gtrsim r$.

The proof begins with a structural characterization. A deterministic edge update preserves a single unit discrepancy under the synchronous coupling if and only if its decision uses only the two endpoint loads and is monotone in their difference. For randomized rules, an auxiliary uniform mark realizes each update as a deterministic monotone threshold rule. This synchronous coupling is the basic coupling familiar from attractive interacting particle systems, and the extra unit may be viewed as a second-class particle \cite{Liggett1985}. The new ingredients are the exact characterization of when the discrepancy remains a single tag, its marked Palm representation, and the transport-volume estimate.

Conditioned on the observed load path, the tag is represented by local conditional-expectation projections on the marked edge space. Writing $J_s$ for its position after lag $s$, a product-of-projections estimate gives, on the cycle,
\[
 \E\dist(J_s,J_0)^2\le 2\pi^2s+2.
\]
We then average a translated family of clipped two-point observables before estimating their response. The positive endpoint contribution can be lost only when the tag exits a protected ball; the negative endpoint contribution can appear only after a displacement of the same order. The finite-horizon martingale variance identity converts this persistence into
\[
 M^2\ge \frac mN\int_0^T
 \frac{[1-p-2q(s)]_+^2}{V_d(R(s))}\,ds,
\]
where $p$ is the gap tail at scale $M$, $q(s)$ is a displacement tail, and $V_d(R)$ is the largest comparison-ball volume. Varying $R(s)$ within the same terminal observable family yields the logarithmic term on rectangular tori.

To the best of our knowledge, the $\Omega(\sqrt n)$ lower bound for greedy graphical allocation on the cycle, and the uniform finite-time result for the endpoint-local monotone class, have not appeared previously. The proof is self-contained apart from elementary martingale facts and Hall's marriage theorem; when Hall's theorem is used, we verify its condition directly.

The paper is organized as follows. \Cref{sec:rules} defines endpoint-local rules and proves the unit-discrepancy characterization. \Cref{sec:palm} constructs the marked Palm tag. \Cref{sec:diffusion} proves displacement estimates. \Cref{sec:transport} establishes the transport-volume inequality. \Cref{sec:cycle} derives the continuous- and discrete-time cycle theorems. \Cref{sec:graphs} treats Hilbert embeddings, finite-width cylinders, and rectangular tori. \Cref{sec:open} records open problems.

\section{Endpoint-local monotone rules}\label{sec:rules}

Let $G=(V,E)$ be a finite simple graph with $N=|V|\ge2$ and $m=|E|\ge1$. Every edge rings according to an independent rate-one Poisson process. Fix an orientation $(u,v)$ of each edge $e$. When $e$ rings, draw an independent random variable $U$ uniformly distributed on $(0,1)$ and allocate a ball to $u$ precisely when
\[
 U\le p_e(x_u-x_v),
\]
where $p_e:\Z\to[0,1]$ is nonincreasing. Otherwise allocate to $v$. The functions $p_e$ may differ between edges. Constant functions are allowed.

The model is invariant under common integer translations of the load vector. The \emph{normalized process} is the induced Markov process on the quotient $\Z^V/\Z\mathbf{1}$; an invariant normalized law is an invariant probability measure on this quotient. We write $P_t$ for the continuous-time semigroup, $X^{[h]}$ for the load vector after exactly $h$ allocation events, and $\mathsf P$ for the one-event transition operator. For a function $f$, set
\[
 D_w f(x)=f(x+e_w)-f(x).
\]
The total rate at which vertex $w$ receives a ball is denoted by $\lambda_w(x)$. Thus
\[
 0\le \lambda_w(x)\le \deg(w),
 \qquad
 \sum_{w\in V}\lambda_w(x)=m.
 \tag{2.1}\label{eq:rates}
\]

The following proposition identifies exactly when a deterministic edge update carries a one-ball perturbation as a single discrepancy.

\begin{proposition}[Characterization of unit-discrepancy updates]\label{prop:characterization}
Fix an edge $\{u,v\}$. Let $s:\Z^V\to\{u,v\}$ be a translation-invariant deterministic selector, and let
\[
 T(x)=x+e_{s(x)}.
\]
Then
\[
 T(x+e_z)-T(x)\in\{e_w:w\in V\}
 \tag{2.2}\label{eq:unit-property}
\]
for every profile $x$ and every vertex $z$ if and only if $s$ depends only on $x_u-x_v$ and the function
\[
 d\longmapsto \one_{\{s(x)=u\}},\qquad d=x_u-x_v,
\]
is nonincreasing.
\end{proposition}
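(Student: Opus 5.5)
Both directions come from writing out $T(x+e_z)-T(x)$ explicitly. Since
\[
T(x+e_z)-T(x)=e_z+e_{s(x+e_z)}-e_{s(x)},
\]
there are two cases. If $s(x+e_z)=s(x)$, the difference is $e_z$, which is a unit vector. If $s(x+e_z)\ne s(x)$, the difference is $e_z+e_a-e_b$, where $a=s(x+e_z)$, $b=s(x)$ and $\{a,b\}=\{u,v\}$. This is a unit vector exactly when $z=b$, in which case it equals $e_a$. Otherwise it has a $-1$ entry and fails \eqref{eq:unit-property}. So \eqref{eq:unit-property} is equivalent to the following condition:
\begin{quote}
$(\ast)$ for all $x$ and $z$, if adding a ball at $z$ changes the selector, then $z$ is the vertex selected at $x$.
\end{quote}
The proof then consists of showing that $(\ast)$ is equivalent to the stated condition on $s$.

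\textbf{Sufficiency.} Assume $s(x)=\sigma(x_u-x_v)$ with $d\mapsto\one_{\{\sigma(d)=u\}}$ nonincreasing. Adding $e_z$ with $z\notin\{u,v\}$ leaves $d$ unchanged, so the selector does not change. Adding $e_u$ raises $d$ by one, so by monotonicity the selector can only switch from $u$ to $v$; hence if it changes, $s(x)=u=z$. The case $z=v$ is symmetric: $d$ drops by one, so the selector can only switch from $v$ to $u$, and then $s(x)=v=z$.

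\textbf{Necessity.} Assume $(\ast)$.

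First, $s$ does not depend on coordinates outside $\{u,v\}$. Take $z\notin\{u,v\}$. Then $z\ne s(x)$, so $(\ast)$ gives $s(x+e_z)=s(x)$. Applying the same statement at $x-e_z$ gives $s(x-e_z)=s(x)$. Hence $s$ is invariant under changing any single coordinate outside $\{u,v\}$ by $\pm1$, and by induction under arbitrary changes of those coordinates.

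Second, $s$ depends on $(x_u,x_v)$ only through $x_u-x_v$. By translation invariance, $s(x)=s(x+\mathbf 1)$, so $s(x+e_u+e_v)=s(x)$ once the other coordinates are removed by the previous step. Therefore $s$ is a function $\sigma(d)$ of $d=x_u-x_v$.

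Third, monotonicity. Suppose $\sigma(d)=v$ and $\sigma(d+1)=u$. Take $x$ with difference $d$ and add $e_u$. The selector changes, but $z=u\ne v=s(x)$, which contradicts $(\ast)$. Hence $\one_{\{\sigma(d)=u\}}$ is nonincreasing.

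There is no real obstacle. The one point needing care is the first step of necessity: $(\ast)$ only speaks about adding a ball, so obtaining full independence from the other coordinates requires applying it at the shifted point $x-e_z$ as well as at $x$.
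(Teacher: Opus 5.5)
Your proof is correct and follows essentially the same route as the paper. Both arguments get off-edge independence by applying the unit property at $x$ and at $x-e_z$, use translation invariance to reduce to a function of $x_u-x_v$, obtain monotonicity from the $2e_u-e_v$ obstruction, and check the converse case by case. Your intermediate condition $(\ast)$ is a tidy repackaging of the same computation, and it spells out the translation-invariance step a bit more explicitly than the paper does.
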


\begin{proof}
Assume first that \eqref{eq:unit-property} holds. If $z\notin\{u,v\}$ and $s(x+e_z)\ne s(x)$, then
\[
 T(x+e_z)-T(x)=e_z+e_{s(x+e_z)}-e_{s(x)},
\]
which has three nonzero coordinates. Hence increasing an off-edge coordinate cannot change the selection. Applying the same conclusion to $x-e_z$ shows that decreasing that coordinate cannot change the selection either, so $s$ is independent of all off-edge coordinates.

Suppose next that $s(x)=v$ but $s(x+e_u)=u$. Then
\[
 T(x+e_u)-T(x)=2e_u-e_v,
\]
contrary to \eqref{eq:unit-property}. Thus raising the load of $u$ cannot switch the selection toward $u$. The analogous statement holds for $v$. Translation invariance now shows that the selector is a nonincreasing function of $x_u-x_v$.

Conversely, an off-edge perturbation changes no decision. If $z$ is an endpoint not selected at $x$, monotonicity prevents the update from switching toward the raised endpoint, so the discrepancy remains at $z$. If $z=s(x)$, then raising $x_z$ either leaves the selection unchanged or switches it to the opposite endpoint. In the latter case the discrepancy moves to that endpoint. In every case \eqref{eq:unit-property} holds.
\end{proof}

For a randomized rule, the common uniform mark $U$ realizes the update as a deterministic threshold rule for each fixed $U$. Synchronously coupling two processes from $x$ and $x+e_v$ therefore gives a vertex-valued tag such that
\[
 X_t^{x+e_v}-X_t^x=e_{J_t}
 \tag{2.3}\label{eq:one-ball-coupling}
\]
at all physical times. We write $\E_{x,v}$ for expectation under this coupling, started from $x$ and $x+e_v$ with $J_0=v$. If $N_t$ is the number of allocation events by time $t$, then $J_h$ denotes the tag after exactly $h$ events and $J_t=J_{N_t}$ denotes its physical-time position. Consequently, for every bounded translation-invariant function $f$,
\[
 D_vP_tf(x)=\E_{x,v}D_{J_t}f(X_t).
 \tag{2.4}\label{eq:derivative-tag}
\]
The expectation on the right averages over all future clocks and marks. The same coupling in event time gives
\[
 D_v\mathsf P^h f(x)=\E_{x,v}D_{J_h}f(X^{[h]}),
 \qquad h\in\Z_{\ge0}.
 \tag{2.5}\label{eq:derivative-tag-discrete}
\]

\begin{remark}[Scope]\label{rem:scope}
The characterization explains why a strategy may place one ball at one endpoint and still fall outside the theorem. If a decision reads an off-edge load, raising that load can change the selected endpoint and produce a discrepancy of the form $e_z+e_u-e_v$. If endpoint monotonicity fails, a perturbation can produce $2e_z-e_w$. The global set-average strategy of Bansal and Feldheim uses the first mechanism and is therefore not endpoint-local in the sense of \cref{prop:characterization}.
\end{remark}

\section{The marked Palm tag}\label{sec:palm}

The initial location of the discrepancy is biased by the instantaneous allocation rates. Conditional on the current profile $x$, the law $\lambda_w(x)/m$ is the location law of the next allocation; equivalently, it is the Palm distribution of the allocation point process. The corresponding construction is easiest on the marked edge space
\[
 \cM=E\times(0,1).
\]
Let $\rho$ be the probability measure that assigns mass $1/m$ to each edge fiber and Lebesgue measure within the fiber. For a mark $a=(e,U)$, write $\sigma_x(a)$ for the selected endpoint at profile $x$, and define the selection cells
\[
 C_w(x)=\{a\in\cM:\sigma_x(a)=w\}.
\]
Then
\[
 \rho(C_w(x))=\frac{\lambda_w(x)}m.
 \tag{3.1}\label{eq:cell-mass}
\]

When a ball is allocated to $j$, endpoint monotonicity implies that only marks that formerly selected $j$ can change their selected endpoint, and those marks can move only to a neighbor of $j$. The next proposition packages this observation as a sequence of conditional-expectation projections.

\begin{proposition}[Conditional mark representation]\label{prop:palm}
Initialize the tag at a profile $x$ according to
\[
 \Pp(J_0=w\mid X_0=x)=\frac{\lambda_w(x)}m.
 \tag{3.2}\label{eq:palm-init}
\]
Condition on the initial profile, the event times, and the selected base vertices, but not on the hidden edge marks. There is an auxiliary process $(B_h)_{h\ge0}$ on $\cM$ such that
\[
 \bigl(\sigma_{X^{[h]}}(B_h)\bigr)_{h\ge0}
\]
has the same conditional law as the event-indexed discrepancy tag $(J_h)_{h\ge0}$. The initial mark $B_0$ has law $\rho$, and each transition of $B$ is an orthogonal conditional-expectation projection on $L^2(\rho)$. The transition preserves $\rho$ and acts nontrivially only on edge fibers incident to the closed neighborhood of the selected vertex. Consequently, at physical time $t$,
\[
 \Pp(J_t=w\mid\text{selected base path})
 =\frac{\lambda_w(X_t)}m.
 \tag{3.3}\label{eq:conditional-palm}
\]
For every bounded measurable function $F$ of a profile and a tag,
\[
 \sum_v\lambda_v(x)\E_{x,v}F(X_t,J_t)
 =\E_x\sum_w\lambda_w(X_t)F(X_t,w).
 \tag{3.4}\label{eq:palm-link}
\]
\end{proposition}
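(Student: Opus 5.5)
The plan is to build the auxiliary mark process $B$ one allocation event at a time, so that the joint law of the tag and the base path matches the coupling in \eqref{eq:one-ball-coupling}. The key observation concerns one event that places a ball at $j$, taking the profile from $x$ to $x'=x+e_j$. The tag then moves in one of two ways. It moves off $j$ precisely when the hidden mark $a=(e,U)$ of the ringing edge selects the tag vertex $J$ at $x$ but not at $x+e_J$. Otherwise it stays put.

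Viewed through the cells, this says the following. Monotonicity (\cref{prop:characterization}) gives $C_w(x')\supseteq C_w(x)$ for $w\neq j$ and $C_j(x')\subseteq C_j(x)$. Only marks on edges incident to $j$ can change their selection. So the partition $\{C_w(x')\}$ is obtained from $\{C_w(x)\}$ by moving parts of $C_j(x)$ to neighbours of $j$.

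I define the transition of $B$ as follows. Let $\mathcal G_h$ be the $\sigma$-algebra on $\cM$ generated by the partition $\{C_w(X^{[h]})\}_w$. The transition from $B_h$ to $B_{h+1}$ is the Markov kernel whose action on $L^2(\rho)$ is the conditional-expectation projection onto $\mathcal G_{h+1}$, restricted to the fibers of edges incident to the closed neighbourhood of $j$. Concretely, if $B_h\in C_w(x)$ with $w\ne j$, the mark stays in $C_w(x')$. If $B_h\in C_j(x)$, it is redrawn from $\rho$ restricted to $C_j(x)$.

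Under this rule I would prove three facts.
\begin{itemize}
\item The kernel preserves $\rho$. This holds because it resamples within atoms of a partition, using the normalized restriction of $\rho$.
\item It is self-adjoint and idempotent. It is therefore an orthogonal projection, namely $\E_\rho[\,\cdot\mid \sigma(\text{partition refined on the touched fibers})]$.
\item $\sigma_{X^{[h]}}(B_h)$ reproduces the transition law of $J_h$.
\end{itemize}

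The third fact is the crux and the main obstacle. Given $J_h=j$ and the base path, we must check that the true conditional probability that the tag moves to a neighbour $w$ equals $\rho(C_j(x)\cap C_w(x'))/\rho(C_j(x))$. In the coupled processes started from $x$ and $x+e_j$, the ringing mark $a$ of the actual event is hidden. Conditioning only on the selected base vertex $j$ of the $x$-process, $a$ is distributed as $\rho$ restricted to $C_j(x)$; this uses \eqref{eq:cell-mass}. The tag moves to $w$ exactly when $a$ selects $w$ at $x+e_j$. I must also handle events whose base vertex $i\neq J_h$. In that case $J_h$ is unchanged except if $i$'s mark selects $J$ at the perturbed profile. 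The tag, however, only feels $J$'s own neighbourhood, so I verify that the relevant cell increments match by using the same monotonicity.

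The subtle point is that conditioning on the base path but not on the marks leaves these hidden marks exactly $\rho$-restricted to the selected cells. This needs an induction over events. The induction hypothesis is that, given the base path up to step $h$, $B_h$ has law $\rho$ and is conditionally independent of future marks. The main work is making this induction precise. I expect to show that the conditional law of $B_h$ given $\sigma_{X^{[h]}}(B_h)=w$ is $\rho$ restricted to $C_w(X^{[h]})$, which matches the Palm initialization \eqref{eq:palm-init}.

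The consequences then follow. Since $B_h\sim\rho$ for all $h$, we get $\Pp(J_h=w\mid\text{path})=\rho(C_w(X^{[h]}))=\lambda_w(X^{[h]})/m$. Evaluating at $h=N_t$ gives \eqref{eq:conditional-palm}. For \eqref{eq:palm-link}, I would start from $J_0\sim\lambda(x)/m$ and average $F(X_t,J_t)$ over the coupling; this produces $\frac1m\sum_v\lambda_v(x)\E_{x,v}F(X_t,J_t)$. Then I condition on the base path, apply \eqref{eq:conditional-palm}, and multiply by $m$.
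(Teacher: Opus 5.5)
Your argument works, but with a different and simpler kernel than the paper's, provided you commit to the one in your ``Concretely'' sentence: keep $B_h$ fixed off $C_j(x)$, and redraw it from $\rho(\cdot\mid C_j(x))$ on $C_j(x)$.

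\textbf{Fix the kernel description.} Your concrete kernel is the conditional expectation onto the $\sigma$-algebra that collapses the \emph{old} cell $C_j(x)$ to one atom. So it is an orthogonal projection, it preserves $\rho$, and it touches only the fibers of edges at $j$. It is not the localized projection onto $\mathcal G_{h+1}$ you describe just before; that one resamples marks inside the enlarged cells $C_w(x')$, $w\sim j$, and is close to the paper's kernel. The two descriptions cannot be combined. Redrawing $C_j(x)$-marks in the old cell while resampling neighbour marks in $C_w(x')$ overloads $C_j(x)\cap C_w(x')$, so that mixed kernel is not $\rho$-preserving.

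\textbf{The step you call the crux.} With your kernel it is immediate and needs no cell-uniformity invariant. The version you planned, conditioning only on the current tag, is just $\rho$-invariance; conditioned on the tag history it would actually fail for your kernel. There are two cases.
\begin{itemize}
\item If $\sigma_x(B_h)=w\ne j$, then $B_{h+1}=B_h\in C_w(x)\subseteq C_w(x')$, so the tag stays, exactly like the true tag. By \cref{prop:characterization}, a mark selecting $i\ne J$ at $x$ still selects $i$ at $x+e_J$, so the exceptional case you mention is empty.
\item If $\sigma_x(B_h)=j$, the fresh redraw is independent of the past and has the law of the hidden event mark given the selection $j$. Given the selected path, the hidden marks are independent with laws $\rho(\cdot\mid C_{j_h}(x_h))$, because the conditioning event is a product event.
\end{itemize}
Hence both tag sequences are Markov given the base path, with the same initial law and the same transitions.

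\textbf{Comparison with the paper.} The paper resamples every mark of $U_j=\bigcup_{w\in N[j]}C_w(x)$ within its \emph{new} cell. It carries the invariant that, given the path and the current tag $w$, the mark is $\rho$-uniform on $C_w$, and it reads \eqref{eq:tag-transition} off the new cell of a mark uniform on $C_j(x)$. This keeps the auxiliary mark's conditional law a function of the current state only. The price is a larger support: four fibers on the cycle, and up to $\Delta(\Delta+1)$ in general. \cref{lem:projections} needs only $\rho$-preserving conditional-expectation projections. So your kernel would actually improve $4/n$ to $2/n$ in \cref{prop:cycle-diffusion}, and $\Delta(\Delta+1)$ to $\Delta$ in \cref{lem:hilbert-displacement}. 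Your derivations of \eqref{eq:conditional-palm} and \eqref{eq:palm-link} from $\rho$-invariance are the same as the paper's.
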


\begin{proof}
Suppose the base update selects $j$. By endpoint monotonicity, only marks that formerly selected $j$ can change their selected endpoint when $x_j$ is raised, and such a mark can move only to a neighbor of $j$. Let
\[
 U_j=\bigcup_{w\in N[j]}C_w(x),
\]
where $N[j]$ is the closed neighborhood of $j$. The preceding observation shows that $U_j$ is also the union of the new cells indexed by $N[j]$.

Given an old auxiliary mark $b$, first read its new selection cell. If $b\in U_j$, resample it from $\rho$ conditioned on that new cell; if $b\notin U_j$, leave it fixed. This kernel is conditional expectation onto the sigma algebra that distinguishes every point outside $U_j$ and, inside $U_j$, remembers only the new selection cell. It is therefore self-adjoint, idempotent, and $\rho$-preserving. A null cell is never reached under the conditioned law; the kernel may be defined arbitrarily on its null subset.

We prove the conditional-law statement by induction, maintaining the following invariant: conditional on the selected path up to the current event and on the current tag value $w$, the auxiliary mark has law $\rho$ conditioned on the current cell $C_w$. The invariant holds initially because $B_0\sim\rho$ and the tag is its selected endpoint. Suppose it holds before an update selecting $j$. If the old tag $w$ is not $j$, it keeps its value, since raising $x_j$ cannot move a mark toward $j$. If $w\notin N[j]$, its cell is unchanged; if $w$ is a neighbor of $j$, its cell can only grow, and the auxiliary mark is resampled within the new cell $C_w(x+e_j)$. If the old tag is $j$, reading the new cell gives
\[
 \Pp(J'=w\mid x,j,J=j)
 =\frac{\rho(C_j(x)\cap C_w(x+e_j))}{\rho(C_j(x))}.
 \tag{3.5}\label{eq:tag-transition}
\]
The subsequent resampling makes the auxiliary mark conditionally $\rho$-uniform in that new cell. Thus the invariant survives: marks in $U_j$ are resampled within their new cells, while cells outside $U_j$ do not change.

The hidden event mark, conditional on the base selection of $j$, has law $\rho(\,\cdot\mid C_j(x))$, so \eqref{eq:tag-transition} is also the transition law of the discrepancy tag in the synchronous coupling. Independence of the event marks shows that conditioning on the entire selected path introduces no further bias beyond these successive cell conditions. The induction therefore proves equality in conditional law. Since every auxiliary transition preserves $\rho$, \eqref{eq:conditional-palm} follows from \eqref{eq:cell-mass}. Integrating against $F$ gives \eqref{eq:palm-link}. The same construction and identities apply after any prescribed number of allocation events.
\end{proof}

\begin{figure}[t]
\centering
\begin{tikzpicture}[x=1cm,y=1cm,font=\small,>=Latex]
  \def\bw{2.25}
  \def\gap{0.32}
  \foreach \q/\x/\lab in {
    0/0/{e_{j-2,j-1}},
    1/2.57/{e_{j-1,j}},
    2/5.14/{e_{j,j+1}},
    3/7.71/{e_{j+1,j+2}}}{
      \node at ({\x+1.125},2.55) {$\lab$};
      \draw (\x,1.55) rectangle ({\x+\bw},2.05);
      \draw (\x,0) rectangle ({\x+\bw},0.5);
  }
  \node[anchor=east] at (-0.2,1.8) {before};
  \node[anchor=east] at (-0.2,0.25) {after};
  \node at (4.98,1.03) {$x_j\mapsto x_j+1$};

  \draw (1.25,1.55)--(1.25,2.05);
  \node at (0.62,1.8) {$j-2$}; \node at (1.75,1.8) {$j-1$};
  \draw (1.25,0)--(1.25,0.5);
  \node at (0.62,0.25) {$j-2$}; \node at (1.75,0.25) {$j-1$};
  \draw (8.73,1.55)--(8.73,2.05);
  \node at (8.22,1.8) {$j+1$}; \node at (9.46,1.8) {$j+2$};
  \draw (8.73,0)--(8.73,0.5);
  \node at (8.22,0.25) {$j+1$}; \node at (9.46,0.25) {$j+2$};

  \fill[black!12] (2.57,1.55) rectangle (3.35,2.05);
  \fill[black!55] (3.35,1.55) rectangle (4.82,2.05);
  \draw (3.35,1.55)--(3.35,2.05);
  \node at (2.96,1.8) {$j-1$}; \node[white] at (4.08,1.8) {$j$};
  \fill[black!55] (5.14,1.55) rectangle (6.60,2.05);
  \fill[black!12] (6.60,1.55) rectangle (7.39,2.05);
  \draw (6.60,1.55)--(6.60,2.05);
  \node[white] at (5.87,1.8) {$j$}; \node at (7.00,1.8) {$j+1$};

  \fill[black!12] (2.57,0) rectangle (4.05,0.5);
  \fill[black!55] (4.05,0) rectangle (4.82,0.5);
  \draw (4.05,0)--(4.05,0.5);
  \node at (3.31,0.25) {$j-1$}; \node[white] at (4.44,0.25) {$j$};
  \fill[black!55] (5.14,0) rectangle (5.91,0.5);
  \fill[black!12] (5.91,0) rectangle (7.39,0.5);
  \draw (5.91,0)--(5.91,0.5);
  \node[white] at (5.52,0.25) {$j$}; \node at (6.65,0.25) {$j+1$};

  \draw[->,thick] (3.70,1.43) -- (3.70,0.62);
  \draw[->,thick] (6.25,1.43) -- (6.25,0.62);
  \node[align=center] at (4.98,-0.48)
    {marks lost by $C_j$ are absorbed by $C_{j-1}$ or $C_{j+1}$};
  \draw[dashed,rounded corners] (-0.10,-0.78) rectangle (10.08,2.82);
  \node[anchor=west] at (10.18,1.03) {affected edge fibers};
\end{tikzpicture}
\caption{Selection-cell repartition after a ball is allocated to $j$. Each box is one edge fiber of the marked space, partitioned according to the endpoint selected by the corresponding marks. Raising $x_j$ can only shrink the portions selecting $j$ on the two incident edges. Conditional resampling averages within the new cells in the dashed union and is the identity outside it.}
\label{fig:cells}
\end{figure}

\Cref{fig:cells} depicts the cycle case. The proportions of the intervals are rule-dependent; only the nesting forced by monotonicity is used. The outer two fibers are needed because the new cells of $j-1$ and $j+1$ include their other incident edges. This accounts for the four-fiber bound in the displacement estimate below.

\section{Diffusive displacement}\label{sec:diffusion}

The conditional mark process is generally time-inhomogeneous, and its projection operators need not commute. The following estimate controls their product without reordering them.

\begin{lemma}[Products of projections]\label{lem:projections}
Let $Q_1,\ldots,Q_k$ be orthogonal projections on a real or complex Hilbert space. Then
\[
 \operatorname{Re}\ip{f}{(I-Q_k\cdots Q_1)f}
 \le 2\sum_{j=1}^k\norm{(I-Q_j)f}^2.
 \tag{4.1}\label{eq:projection-product}
\]
If $(B_j)$ is the corresponding sequence of projection kernels, started from their common invariant probability $\rho$, then
\[
 \E\norm{f(B_k)-f(B_0)}^2
 \le4\sum_{j=1}^k\norm{(I-Q_j)f}_{L^2(\rho)}^2.
 \tag{4.2}\label{eq:projection-chain}
\]
The second assertion remains valid for Hilbert-valued $f$.
\end{lemma}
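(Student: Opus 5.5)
The plan is to prove \eqref{eq:projection-product} by telescoping along the partial products, using the self-adjointness and idempotence of each $Q_j$, and then to absorb the resulting error term back into the left-hand side. Write $P_0=I$, $P_j=Q_jP_{j-1}$ and $g_j=P_jf$, and set
\[
 S=\operatorname{Re}\ip{f}{(I-P_k)f}.
\]

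\emph{Telescoping.} Since $g_{j-1}-g_j=(I-Q_j)g_{j-1}$, we have
\[
 S=\sum_{j=1}^k\operatorname{Re}\ip{f}{(I-Q_j)g_{j-1}}.
\]
Because $I-Q_j$ is an orthogonal projection, it is self-adjoint and idempotent. Hence each summand equals $\operatorname{Re}\ip{(I-Q_j)f}{(I-Q_j)g_{j-1}}$.

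\emph{Pointwise bound.} Set $a_j=\norm{(I-Q_j)f}$ and $b_j=\norm{(I-Q_j)g_{j-1}}$. Cauchy--Schwarz followed by $ab\le a^2+b^2/4$ gives
\[
 S\le\sum_j a_j^2+\tfrac14\sum_j b_j^2 .
\]

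\emph{Control of $\sum_j b_j^2$.} By Pythagoras for the orthogonal projection $Q_j$,
\[
 \norm{g_{j-1}}^2-\norm{g_j}^2=b_j^2,
\]
so $\sum_j b_j^2=\norm{f}^2-\norm{P_kf}^2$. Expanding $\norm{f-P_kf}^2$ gives the identity
\[
 \norm{f}^2-\norm{P_kf}^2=2S-\norm{f-P_kf}^2\le 2S.
\]

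\emph{Absorption.} Combining the two previous steps, $S\le\sum_ja_j^2+S/2$. Since $S$ is finite, this yields $S\le 2\sum_ja_j^2$, which is \eqref{eq:projection-product}. The main point requiring care is exactly this absorption step: the naive triangle-inequality bound on $f-P_kf$ loses a factor of $k$, and the argument has to avoid it. The argument uses no commutation and no particular ordering of the $Q_j$, so \eqref{eq:projection-product} holds for every ordering of the projections.

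For \eqref{eq:projection-chain}, take $B_0\sim\rho$ and let $K_j$ be the kernel of the $j$th step. Each kernel acts on $L^2(\rho)$ as $Q_j$ and preserves $\rho$, so every $B_j$ has law $\rho$. The Markov property gives $\E[f(B_k)\mid B_0]=(Q_1Q_2\cdots Q_k f)(B_0)$. Expanding the square therefore yields
\[
 \E\norm{f(B_k)-f(B_0)}^2=2\norm f^2-2\operatorname{Re}\ip{f}{Q_1\cdots Q_kf}=2\operatorname{Re}\ip{f}{(I-Q_1\cdots Q_k)f}.
\]
Applying \eqref{eq:projection-product} to the reversed product $Q_1\cdots Q_k$ bounds this by $4\sum_j\norm{(I-Q_j)f}^2$.

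For Hilbert-valued $f$ with values in $H$, the same computation works verbatim in $L^2(\rho;H)$. There the $Q_j$ act as $Q_j\otimes I_H$, which are again orthogonal projections. Alternatively, one can expand $f$ in an orthonormal basis of $H$, apply the scalar bound to each coordinate, and sum.
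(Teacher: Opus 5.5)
Your proof is correct and follows the paper's argument: you use the same telescoping $f-P_kf=\sum_j(I-Q_j)g_{j-1}$ and the same identity $\sum_j b_j^2=\|f\|^2-\|P_kf\|^2=2S-\|f-P_kf\|^2\le 2S$. The absorption step differs only cosmetically: you use $ab\le a^2+b^2/4$ where the paper uses $A^2\le SD\le 2SA$, which spares you the sign check and case split. For \eqref{eq:projection-chain}, you apply \eqref{eq:projection-product} directly to the reversed product, while the paper passes to the adjoint, and these are equivalent.
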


\begin{proof}
Set $v_0=f$ and $v_j=Q_jv_{j-1}$. Define
\[
 A=\operatorname{Re}\ip{f}{f-v_k},\qquad
 S=\sum_j\norm{(I-Q_j)f}^2,
 \qquad
 D=\sum_j\norm{(I-Q_j)v_{j-1}}^2.
\]
Pythagoras gives $D=\norm f^2-\norm{v_k}^2$. Since
\[
 f-v_k=\sum_j(I-Q_j)v_{j-1}
\]
and $(I-Q_j)v_{j-1}$ is orthogonal to the range of $Q_j$,
\[
 A
 =\sum_j\operatorname{Re}\ip{(I-Q_j)f}{(I-Q_j)v_{j-1}}
 \le\sqrt{SD}.
\]
Also
\[
 2A-D=\norm{f-v_k}^2\ge0,
\]
and contraction gives $A\ge0$. If $A>0$, then $A^2\le SD\le2SA$, which proves \eqref{eq:projection-product}; the case $A=0$ is immediate.

For the Markov chain, expand the square in \eqref{eq:projection-chain}. The cross term is the quadratic form of the forward product, whose adjoint is the displayed reverse product and has the same real part. Apply \eqref{eq:projection-product}. The Hilbert-valued case is the same argument in $L^2(\rho;\mathcal H)$.
\end{proof}

On the cycle, the first Fourier mode of the edge midpoints converts \cref{lem:projections} into a diffusive estimate.

\begin{proposition}[Diffusive displacement on the cycle]\label{prop:cycle-diffusion}
For every endpoint-local monotone rule on $C_n$, every starting profile, and the initialization \eqref{eq:palm-init}, the tag after exactly $h$ allocations satisfies
\[
 \E\dist_{C_n}(J_h,J_0)^2\le \frac{2\pi^2h}{n}+2.
 \tag{4.3}\label{eq:cycle-diffusion-events}
\]
Consequently, at physical time $s$,
\[
 \E\dist_{C_n}(J_s,J_0)^2\le2\pi^2s+2.
 \tag{4.4}\label{eq:cycle-diffusion}
\]
\end{proposition}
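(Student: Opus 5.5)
We apply \cref{lem:projections} to a Hilbert-valued test function on the marked space $\cM$ built from the first Fourier mode of the edge midpoints. Label the vertices of $C_n$ by $\Z/n\Z$ and the edge $e_{i,i+1}$ by its midpoint $i+\tfrac12$. Define $f:\cM\to\mathbb C$ by
\[
 f(e,U)=\exp\!\bigl(2\pi\mathrm i\,\mu(e)/n\bigr),
\]
where $\mu(e)$ is the midpoint of $e$. Thus $f$ depends only on the edge fiber of the mark. Condition on the selected base path, as in \cref{prop:palm}. The chain $(B_h)$ then starts from $\rho$, and its transitions are orthogonal conditional-expectation projections $Q_1,\dots,Q_h$ on $L^2(\rho)$. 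Applying \eqref{eq:projection-chain} gives
\[
 \E\,|f(B_h)-f(B_0)|^2\le 4\sum_{j=1}^h\norm{(I-Q_j)f}^2_{L^2(\rho)}.
\]

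The first key step bounds each summand. If the $j$-th event selects vertex $i$, then $Q_j$ acts as the identity outside the four edge fibers $e_{i-2,i-1},e_{i-1,i},e_{i,i+1},e_{i+1,i+2}$, as \cref{fig:cells} shows. Hence $(I-Q_j)f$ is supported on these fibers, which have total $\rho$-mass $4/n$. On this set $Q_jf$ is an average of values of $f$ over the same four fibers. All these values lie on an arc of the unit circle whose chord lengths are at most $|e^{2\pi\mathrm i\cdot 3/n}-1|\le 6\pi/n$. A more careful estimate uses $\norm{(I-Q_j)f}^2\le\norm{f-c}^2_{L^2(\rho|_{\text{supp}})}$ with $c$ the value at the center, $i$. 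The midpoints then sit at distances $\tfrac12$ and $\tfrac32$ from $i$, which gives
\[
 \norm{(I-Q_j)f}^2\le \frac1n\cdot 2\Bigl(\bigl(\tfrac{\pi}{n}\bigr)^2+\bigl(\tfrac{3\pi}{n}\bigr)^2\Bigr)=\frac{20\pi^2}{n^3}.
\]
This is the place where constants must be tuned to reach $2\pi^2h/n$. That target corresponds to a per-step bound of order $\pi^2/(2n^3)$ after the factor $4$ and the conversion below, so the crude center estimate is too lossy. I would refine it by using that $Q_j$ only moves mass between cells of adjacent vertices. If constants cannot be matched exactly, I would accept a bound of the same form with a larger constant. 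Either way, the result is $\E|f(B_h)-f(B_0)|^2\le C h/n^3$.

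The second step converts this into a bound on distance. Each edge fiber contains the mark, and the selected endpoint $J_h=\sigma_{X^{[h]}}(B_h)$ is within $\tfrac12$ of the midpoint of $B_h$'s edge. The circular distance $d$ between the midpoints satisfies $d\le n/2$, and $|e^{2\pi\mathrm i d/n}-1|=2\sin(\pi d/n)\ge 4d/n$. Therefore $d^2\le \tfrac{n^2}{16}|f(B_h)-f(B_0)|^2$. Since $\dist(J_h,J_0)\le d+1$, using $(d+1)^2\le 2d^2+2$ gives
\[
 \E\dist(J_h,J_0)^2\le 2\cdot\tfrac{n^2}{16}\cdot\tfrac{Ch}{n^3}+2=\tfrac{Ch}{8n}+2.
\]
With $C$ sharp enough, this yields \eqref{eq:cycle-diffusion-events}. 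We then remove the conditioning by taking expectations, using that \cref{prop:palm} identifies the conditional law of $(J_h)$ with that of $\sigma(B_h)$.

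The physical-time bound \eqref{eq:cycle-diffusion} follows from the event-time bound. Conditional on the event times, $J_s=J_{N_s}$ with $N_s$ Poisson of mean $ns$, and the event-time estimate holds conditionally on the base path. Hence $\E\dist^2\le 2\pi^2\E N_s/n+2=2\pi^2 s+2$. The main obstacle is the constant bookkeeping in the per-step projection estimate; the structure of the argument is otherwise routine.
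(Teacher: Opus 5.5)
Your architecture matches the paper's: the first Fourier mode $f$ of edge midpoints, \eqref{eq:projection-chain}, the chord bound $2\sin(\pi\delta/n)\ge4\delta/n$, the estimate $(\delta+1)^2\le2\delta^2+2$, and Poisson averaging. However, the proposal stops short of the statement.

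Your center estimate $\norm{(I-Q_j)f}^2\le20\pi^2/n^3$ is valid, since a function equal to $f$ off the four fibers and constant on them lies in the range of $Q_j$. Fed through your own conversion, though, it gives only $\E\dist(J_h,J_0)^2\le10\pi^2h/n+2$, and you explicitly leave the constant open.

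Your diagnosis of what is needed is also off. A per-step bound $\varepsilon$ yields $\E\dist^2\le\tfrac{n^2}{2}h\varepsilon+2$, so the target is $\varepsilon\le4\pi^2/n^3$, not $\pi^2/(2n^3)$. The constant is not cosmetic: \cref{lem:cycle-quantile} needs $q(s)\le1/16$ for $s\le R^2/(64\pi^2)$. With $10\pi^2$ in place of $2\pi^2$, the Markov bound there already exceeds $1/8$.

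The missing step is to use the actual structure of $Q_j$ rather than one common center for all four fibers. Inside the affected union $U_j$, $Q_j$ averages only within each new selection cell $C_w(x+e_j)$, $w\in N[j]$. A selection cell of $w$ lies in the two fibers incident to $w$, whose midpoints are at cyclic distance one. On such a cell $f$ takes just two values at chord distance $2\sin(\pi/n)$, so its conditional variance is at most $\sin^2(\pi/n)$. Hence
\[
 \norm{(I-Q_j)f}^2_{L^2(\rho)}
 =\sum_{w\in N[j]}\rho\bigl(C_w(x+e_j)\bigr)\Var\bigl(f\mid C_w(x+e_j)\bigr)
 \le\rho(U_j)\sin^2(\pi/n)\le\frac4n\sin^2(\pi/n).
\]
This is exactly the needed $4\pi^2/n^3$. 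Then \eqref{eq:projection-chain} gives $\E|f(B_h)-f(B_0)|^2\le16h\sin^2(\pi/n)/n$. For the midpoint distance $\delta$ (your $d$), this yields $\E\delta^2\le nh\sin^2(\pi/n)\le\pi^2h/n$, and so $\E\dist(J_h,J_0)^2\le2\pi^2h/n+2$.

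Your remark that $Q_j$ only moves mass between adjacent cells points in this direction. The proof needs this per-cell variance bound actually carried out, not a fallback to a larger constant.
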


\begin{proof}
Condition on a base path with $h$ allocation events. Give each marked edge its cyclic midpoint $z$ and set
\[
 f(a)=e^{2\pi iz(a)/n}.
\]
A selection cell at a vertex uses at most the two adjacent edge midpoints, whose cyclic distance is one. Its conditional variance is at most $\sin^2(\pi/n)$. If the vertex selected at update $j$ is $w$, the affected union is contained in the four edge fibers incident to $w-1,w,$ or $w+1$. Hence
\[
 \norm{(I-Q_j)f}_{L^2(\rho)}^2
 \le\frac4n\sin^2(\pi/n)
\]
for every $1\le j\le h$. By \eqref{eq:projection-chain},
\[
 \E|f(B_h)-f(B_0)|^2
 \le\frac{16h}{n}\sin^2(\pi/n).
\]
If $\delta\in[0,n/2]$ is the cyclic distance between the two edge midpoints, then
\[
 |f(B_h)-f(B_0)|=2\sin(\pi\delta/n)\ge4\delta/n.
\]
Thus
\[
 \E\delta^2\le nh\sin^2(\pi/n)\le\pi^2h/n.
\]
Each selected endpoint lies within distance $1/2$ of its edge midpoint. Therefore
\[
 \E\dist(J_h,J_0)^2\le2\pi^2h/n+2.
\]
The number of events by physical time $s$ is Poisson with mean $ns$. Averaging over it proves \eqref{eq:cycle-diffusion}.
\end{proof}

The same argument works with a Hilbert-space coordinate on a general graph. We postpone that form to \cref{sec:graphs}, where it is used.

\section{The transport-volume inequality}\label{sec:transport}

The martingale variance identity supplies the finite energy budget. The terminal time is fixed, so no stationary law or long-time limit enters.

\begin{lemma}[Finite-horizon response energy]\label{lem:variance-budget}
Let $f$ be bounded and invariant under common translations. For every initial law and every $t\ge0$,
\[
 \int_0^t\E\sum_v\lambda_v(X_{t-s})
      |D_vP_sf(X_{t-s})|^2\,ds
 \le \Var(f(X_t)).
 \tag{5.1}\label{eq:variance-budget}
\]
\end{lemma}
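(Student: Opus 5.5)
We prove \eqref{eq:variance-budget} with the Doob martingale of the terminal value. Fix $t$ and set $M_s=P_{t-s}f(X_s)$ for $0\le s\le t$. Since $f$ is bounded and translation invariant, $P_{t-s}f$ is a well-defined bounded function on the normalized state space. The Markov property makes $(M_s)$ a bounded martingale with $M_t=f(X_t)$ and $M_0=P_tf(X_0)$. The process $X$ jumps only by $x\mapsto x+e_w$, at rate $\lambda_w(x)$, and there are finitely many edges. Hence $M$ is a pure-jump martingale, and at a jump to $w$ at time $s$ its jump is $D_wP_{t-s}f(X_{s-})$.

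The first step is the orthogonal-increment decomposition
\[
 \Var f(X_t)=\Var M_0+\E\bigl[(M_t-M_0)^2\bigr].
\]
The second step identifies the last term through the predictable quadratic variation:
\[
 \E\bigl[(M_t-M_0)^2\bigr]=\E\int_0^t\sum_w\lambda_w(X_s)\,|D_wP_{t-s}f(X_s)|^2\,ds.
\]
To justify this, use that the compensated sum of squared jumps is a martingale. The total jump rate is at most $m$ and the integrand is bounded by $4\norm f_\infty^2\,m$, so everything is integrable and no moment assumption on $X_0$ is needed. An alternative route is to apply Dynkin's formula to $g_s(x)=(P_{t-s}f(x))^2$, using the carré du champ identity
\[
 \mathcal L(g^2)-2g\mathcal Lg=\sum_w\lambda_w(x)|D_wg(x)|^2.
\]

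The third step substitutes $s\mapsto t-s$, which turns the integral into exactly the left side of \eqref{eq:variance-budget}. Dropping $\Var M_0\ge0$ gives the inequality; in fact it holds with the slack $\Var(P_tf(X_0))$.

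The only delicate point is rigor, not the computation. The time-dependent function $P_{t-s}f$ must be differentiable in $s$ in the sense needed. Here the generator is bounded on bounded functions, because the total rate is $m$, so $P_s=e^{s\mathcal L}$ is norm-continuous and the forward and backward equations hold uniformly. This makes the Itô and Dynkin step routine. Complex-valued $f$, if needed later, are handled the same way with $|\cdot|^2$.
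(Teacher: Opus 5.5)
Your proof is correct and is essentially the paper's argument. Both use the Doob martingale $P_{t-u}f(X_u)$, with jumps $D_wP_{t-u}f(X_{u-})$ at rate $\lambda_w(X_{u-})$, apply the martingale isometry to the predictable bracket, substitute $s=t-u$, and discard the nonnegative slack $\Var(P_tf(X_0))=\E(P_tf(X_0))^2-(\E f(X_t))^2$. One point of wording only: between jumps $M$ drifts continuously by $-\mathcal L P_{t-u}f(X_u)$, so it is not piecewise constant, but as a finite-variation martingale it is purely discontinuous and its bracket is still the compensated sum of squared jumps, as you use.
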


\begin{proof}
The process
\[
 M_u=P_{t-u}f(X_u),\qquad 0\le u\le t,
\]
is a bounded martingale. At an allocation to $v$, its jump is $D_vP_{t-u}f(X_{u-})$, and such allocations occur at rate $\lambda_v(X_{u-})$. The expected predictable quadratic variation is therefore the left side of \eqref{eq:variance-budget}, after the change of variables $s=t-u$. The martingale isometry identifies this expectation with
\[
 \E f(X_t)^2-\E(P_tf(X_0))^2
 \le \E f(X_t)^2-(\E f(X_t))^2.
\]
All finite-horizon terms are integrable because $f$ is bounded and the total jump rate is $m$.
\end{proof}

Let $d$ be a metric or pseudometric on $V$, and write
\[
 B(v,R)=\{w:d(v,w)\le R\},
 \qquad
 V_d(R)=\max_v|B(v,R)|.
\]
Fix a permutation $\psi$ of $V$. For $0\le s\le T$, let $R(s)>0$ satisfy
\[
 d(i,\psi(i))\ge2R(s)
 \qquad\text{for every }i.
 \tag{5.2}\label{eq:separation}
\]
Suppose $q(s)$ is a bound, uniform in the starting profile, on the Palm-tag displacement tail
\[
 \Pp(d(J_s,J_0)\ge R(s))\le q(s).
 \tag{5.3}\label{eq:tail-envelope}
\]

\begin{figure}[!ht]
\centering
\begin{tikzpicture}[>=Latex,font=\small]
  \def\rad{1.62}
  \def\shadeStart{112.5}
  \def\shadeEnd{247.5}
  \def\orad{1.88}

  \begin{scope}[shift={(-2.70,0)}]
    \node[font=\small] at (0,2.38) {(a) Lost positive credit};
    \draw[thick] (0:\rad) arc (0:360:\rad);
    \foreach \theta in {0,22.5,...,337.5}{
      \node[circle,fill=black,inner sep=1.05pt] at (\theta:\rad) {};
    }
    \draw[line width=5pt,black!28]
      (\shadeStart:\rad) arc (\shadeStart:\shadeEnd:\rad);
    \node[circle,fill=white,draw=black,inner sep=2pt,label=right:$i$]
      (left-i) at (180:\rad) {};
    \node[circle,fill=white,draw=black,inner sep=1.6pt,
      label={[label distance=2pt]below:$u$}]
      (left-out) at (90:\rad) {};
    \draw[->,very thick,dashed]
      (95:\orad) arc[start angle=95,end angle=172,radius=\orad];
    \node[anchor=east,fill=white,inner sep=1.5pt]
      at (-1.75,1.30) {$d(u,i)\ge R$};
  \end{scope}

  \begin{scope}[shift={(2.70,0)}]
    \node[font=\small] at (0,2.38) {(b) Negative credit};
    \draw[thick] (0:\rad) arc (0:360:\rad);
    \foreach \theta in {0,22.5,...,337.5}{
      \node[circle,fill=black,inner sep=1.05pt] at (\theta:\rad) {};
    }
    \draw[line width=5pt,black!28]
      (\shadeStart:\rad) arc (\shadeStart:\shadeEnd:\rad);
    \node[circle,fill=white,draw=black,inner sep=2pt,label=right:$i$]
      (right-i) at (180:\rad) {};
    \node[circle,fill=white,draw=black,inner sep=2pt,label=left:$i+r$]
      (right-j) at (0:\rad) {};
    \node[circle,fill=white,draw=black,inner sep=1.6pt,
      label={[label distance=2pt]above right:$v$}]
      (right-in) at (225:\rad) {};
    \draw[->,very thick,dotted]
      (230:\orad) arc[start angle=230,end angle=352,radius=\orad];
    \node[align=center,fill=white,inner sep=1.5pt]
      at (0,-2.20) {$d(v,i+r)\ge r-R\ge R$};
  \end{scope}
\end{tikzpicture}
\caption{The two exceptional histories in the averaged two-point test; the gray arc is $A_i=B(i,R)$. In (a), a tag that ends at $i$ but starts outside $A_i$ must travel at least $R$. In (b), a tag that starts in $A_i$ and ends at $i+r$ must travel at least $r-R\ge R$. Averaging over $i$ counts each terminal vertex once in each family.}
\label{fig:cycle-test}
\end{figure}
\FloatBarrier

\Cref{fig:cycle-test} shows the two exceptional histories in the averaged two-point test. If the tag remains inside $A_i$, then the positive derivative at $i$ and the negative derivative at $\psi(i)$ cannot cancel. While displacement by $R(s)$ is unlikely, the clipped contrast retains mean response on a region of size at most $V_d(R(s))$; conditional variance pays the square of that response. Integrating this cost over the available lags gives the transport-volume inequality.

\begin{theorem}[Transport-volume inequality]\label{thm:transport-volume}
Let $T\le t$ and $M\ge1$. Put
\[
 p=\Pp(\Gap(X_t)>M-1).
\]
Under \eqref{eq:separation} and \eqref{eq:tail-envelope},
\[
 \boxed{
 M^2\ge\frac mN\int_0^T
 \frac{[1-p-2q(s)]_+^2}{V_d(R(s))}\,ds.}
 \tag{5.4}\label{eq:transport-volume}
\]
In particular, if $p\le1/8$ and $q(s)\le1/8$ throughout the interval, then
\[
 M^2\ge\frac m{4N}\int_0^T\frac{ds}{V_d(R(s))}.
 \tag{5.5}\label{eq:transport-volume-simple}
\]
\end{theorem}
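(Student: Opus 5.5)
The plan is to test \cref{lem:variance-budget} against a family of clipped two-point observables, one for each vertex, and then sum the resulting inequalities. Set $\varphi(y)=\max\{-M,\min\{M,y\}\}/M$ and, for each $i\in V$,
\[
 f_i(x)=\varphi\bigl(x_i-x_{\psi(i)}\bigr).
\]
Each $f_i$ is translation invariant, $|f_i|\le1$ and $\Var f_i(X_t)\le1$. When $\Gap(x)\le M-1$, raising one coordinate keeps the argument inside $[-M,M]$, so $D_wf_i(x)=M^{-1}(\one_{\{w=i\}}-\one_{\{w=\psi(i)\}})$. For every $x$ we still have $D_wf_i(x)\in\{0,\pm M^{-1}\}$, with the sign $+$ possible only for $w=i$ and $-$ only for $w=\psi(i)$.

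I will not use the symmetric average $\frac1N\sum_if_i$ as a single test. Its derivative vanishes identically on good configurations, because every vertex occurs exactly once as $i$ and once as $\psi(i)$. Instead I apply \eqref{eq:variance-budget} to each $f_i$ and sum over $i$:
\[
 \int_0^t\E\sum_i\sum_v\lambda_v(X_{t-s})|D_vP_sf_i(X_{t-s})|^2\,ds\le N.
\]
I restrict to $s\le T$ and to pairs with $v\in A_i=B(i,R(s))$.

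For fixed $s$ and profile $x=X_{t-s}$, apply Cauchy--Schwarz jointly in $(i,v)$ with weights $\lambda_v$:
\[
 \sum_i\sum_{v\in A_i}\lambda_v|D_vP_sf_i|^2\ \ge\ \frac{\bigl(\sum_i\sum_{v\in A_i}\lambda_vD_vP_sf_i\bigr)^2}{\sum_v\lambda_v\,|\{i:d(i,v)\le R\}|}.
\]
The denominator is at most $mV_d(R(s))$ by \eqref{eq:rates}. For the numerator I use \eqref{eq:derivative-tag} to write $D_vP_sf_i(x)=\E_{x,v}D_{J_s}f_i(X_s)$. The weights $\lambda_v(x)$ turn the sum over $v$ into $m$ times an expectation under the Palm initialization \eqref{eq:palm-init}. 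The numerator is therefore $m\,\E^{\mathrm{Palm}}_x\sum_i\one_{\{J_0\in A_i\}}D_{J_s}f_i(X_s)$.

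The key pointwise estimate comes next.
\begin{itemize}
\item The positive contributions come from $i=J_s$ only. They are present when $d(J_0,J_s)\le R$ and the terminal gap is at most $M-1$.
\item The only possible negative contribution is $i=\psi^{-1}(J_s)$, with $J_0\in A_i$. By \eqref{eq:separation} this forces $d(J_0,J_s)\ge d(i,\psi(i))-d(J_0,i)\ge R$.
\end{itemize}
Hence the inner sum is at least $M^{-1}\bigl(1-\one_{\{\Gap(X_s)>M-1\}}-2\cdot\one_{\{d(J_0,J_s)\ge R\}}\bigr)$.

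Next I take expectations over $X_{t-s}$. The term involving the terminal gap becomes exactly $p$ by \eqref{eq:conditional-palm}/\eqref{eq:palm-link}: the tag weight $\lambda_w(X_t)/m$ sums to one, so the Palm-weighted probability of a bad terminal state equals the unweighted one. The displacement term is at most $q(s)$ by \eqref{eq:tail-envelope}, uniformly in the starting profile. I then need to pass the square outside the outer expectation. For this I use Jensen, $\E Z^2\ge(\E Z)^2$ on the ratio with its deterministic denominator bound, and take positive parts. This gives
\[
 \frac{m^2[1-p-2q(s)]_+^2}{M^2\,mV_d(R(s))}
\]
as a lower bound for the integrand. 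Integrating over $[0,T]$ and comparing with $N$ yields \eqref{eq:transport-volume}. The special case \eqref{eq:transport-volume-simple} follows because $1-p-2q\ge1-\tfrac18-\tfrac14\ge\tfrac12$.

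I expect the main obstacle to be the bookkeeping in the Palm step. The terminal configuration in $f_i$ is $X_t$, while the tag starts from the Palm law at time $t-s$, so I must check that \eqref{eq:palm-link} applies with an $F$ that depends on the initial tag $J_0$ as well as on $(X_s,J_s)$. I would handle this by conditioning on $X_{t-s}$ and applying \cref{prop:palm} to the subsequent path. Here the indicator of $\{J_0\in A_i\}$ is absorbed into the sum over $v$ before averaging, and the gap indicator depends only on the terminal profile, so that \eqref{eq:conditional-palm} gives exactly $p$.
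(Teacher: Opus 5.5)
Your proposal is correct and is essentially the paper's proof. It uses the same clipped contrasts $f_i$ summed through \cref{lem:variance-budget}, the same restriction to $v\in A_i$, the same accounting of the positive term at $i=J_s$ and the negative term at $i=\psi^{-1}(J_s)$, and Cauchy--Schwarz against the $\lambda$-weighted measure of mass at most $mV_d(R(s))$; you apply it pointwise in $x$ and then use Jensen, which is equivalent to the paper's joint Cauchy--Schwarz over $(i,x,v)$. One harmless slip: for non-integer $M$ the differences $D_wf_i$ lie in $[-1/M,1/M]$ with the stated signs rather than in $\{0,\pm1/M\}$, but your argument only uses those signs and the exact value $1/M$ on $\{\Gap(x)\le M-1\}$, so nothing changes.
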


\begin{proof}
For each $i\in V$, define the clipped contrast
\[
 f_i(x)=\clip_{[-1,1]}\!\left(\frac{x_i-x_{\psi(i)}}M\right).
\]
Its only nonzero finite differences are at $i$ and $\psi(i)$, and
\[
 D_if_i(x)\ge\frac1M\one_{\{\Gap(x)\le M-1\}},
 \qquad
 -\frac1M\le D_{\psi(i)}f_i(x)\le0.
 \tag{5.6}\label{eq:clipped-derivatives}
\]
For a fixed lag $s$, start the coupled tag at time $t-s$ according to \eqref{eq:palm-init}, and write $J_0,J_s$ for its positions during that interval. Set $A_i=B(i,R(s))$ and
\[
 S(s)=\frac1N\sum_i\E\sum_{v\in A_i}
 \lambda_v(X_{t-s})D_vP_sf_i(X_{t-s}).
\]
By the derivative identity \eqref{eq:derivative-tag} and the initialization \eqref{eq:palm-init},
\[
 S(s)=\frac mN\sum_i\E\bigl[
 \one_{\{J_0\in A_i\}}D_{J_s}f_i(X_t)
 \bigr].
 \tag{5.7}\label{eq:transport-palm}
\]
Without the restriction $J_0\in A_i$, the positive terminal contribution is
\[
 \frac mN\sum_i\E\bigl[
 \one_{\{J_s=i\}}D_if_i(X_t)
 \bigr]
 =\frac1N\E\sum_i\lambda_i(X_t)D_if_i(X_t)
 \ge\frac m{NM}(1-p).
 \tag{5.8}\label{eq:positive-credit}
\]
The equality uses the conditional Palm law \eqref{eq:conditional-palm}; the inequality uses \eqref{eq:clipped-derivatives} and the rate identity \eqref{eq:rates}.

Restricting to $J_0\in A_i$ removes positive credit only on
\[
 \{J_s=i,\ J_0\notin A_i\},
\]
which requires displacement at least $R(s)$. The only negative terminal derivative occurs at $\psi(i)$, and its contribution with $J_0\in A_i$ is supported on
\[
 \{J_s=\psi(i),\ J_0\in A_i\}.
\]
This event also requires displacement at least $R(s)$ because
\[
 d(i,\psi(i))-d(i,J_0)\ge R(s).
\]
When these events are summed over $i$, the terminal location is counted once in the first family because $i\mapsto i$ is a permutation and once in the second because $i\mapsto\psi(i)$ is a permutation. Since every derivative has magnitude at most $1/M$, their total cost is at most $2mq(s)/(NM)$. Hence
\[
 S(s)\ge\frac m{NM}[1-p-2q(s)].
 \tag{5.9}\label{eq:mean-response}
\]

Apply Cauchy--Schwarz to the measure assigning mass $N^{-1}\lambda_v(x)$ to triples $(i,x,v)$ with $v\in A_i$. Its total mass is at most
\[
 \frac1N\E\sum_v\lambda_v(X_{t-s})
       |\{i:v\in B(i,R(s))\}|
 \le\frac mN V_d(R(s)).
\]
Writing
\[
 \gamma_i(s,x)=\sum_v\lambda_v(x)|D_vP_sf_i(x)|^2,
\]
we obtain
\[
 \frac1N\sum_i\E\gamma_i(s,X_{t-s})
 \ge\frac m{NM^2V_d(R(s))}[1-p-2q(s)]_+^2.
 \tag{5.10}\label{eq:energy-lower}
\]
Integrate over $[0,T]$. By \cref{lem:variance-budget} and $|f_i|\le1$, the averaged integral on the left is at most one. This proves \eqref{eq:transport-volume}; the numerical specialization follows from $1-1/8-2/8\ge1/2$.
\end{proof}

The next elementary bound handles parameter ranges in which the transport scale is below one.

\begin{lemma}[Load-sum phase]\label{lem:phase}
For every rule that adds one ball at each event and every $t\ge1/m$,
\[
 \Pp(\Gap(X_t)\ge1)\ge e^{-1}.
 \tag{5.11}\label{eq:phase-long}
\]
\end{lemma}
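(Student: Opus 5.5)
The plan is to use only the fact that each event raises the total load by exactly one, together with a parity-type obstruction on flat configurations. The observation is that $\Gap(x)=0$ forces $x=c\mathbf 1$ for some integer $c$, so the total load $\Sigma(x)=\sum_v x_v=cN$ is divisible by $N$. Since every event adds exactly one ball, $\Sigma(X_t)=\Sigma(X_0)+N_t$, where $N_t$ is the number of events by time $t$. The total event rate is always $m$, because each edge rings at rate one regardless of the rule. Hence $N_t$ is Poisson with mean $mt$, and it is independent of $X_0$: the clocks are independent of the initial law, and the rule only decides where each ball goes, not when it arrives.

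First step: condition on $X_0$ and set $r\equiv-\Sigma(X_0)\pmod N$. Then
\[
 \Pp(\Gap(X_t)=0\mid X_0)\le \Pp(N_t\equiv r \pmod N)
 \le \sup_{r'}\Pp(N_t\equiv r'\pmod N).
\]
It therefore suffices to show that a Poisson variable with mean $\mu=mt\ge1$ puts mass at most $1-e^{-1}$ on any single residue class modulo $N\ge2$. Averaging over $X_0$ then yields \eqref{eq:phase-long}.

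Second step: this residue-class bound is the only real point. Bounding the probability directly via small values of $N_t$ degrades as $\mu$ grows, so I would instead decompose $N_t=P_1+P_2$ with $P_1\sim\mathrm{Poisson}(\mu-1)$ and $P_2\sim\mathrm{Poisson}(1)$ independent; this uses $\mu\ge1$, i.e.\ $t\ge1/m$. Conditionally on $P_1=k$, the event $N_t\equiv r'$ is the event that $P_2$ lies in the residue class $r'-k$. Because $N\ge2$, a residue class cannot contain both $0$ and $1$. Since $\Pp(P_2=0)=\Pp(P_2=1)=e^{-1}$, the complement of the class has conditional mass at least $e^{-1}$. Integrating over $P_1$ gives
\[
 \Pp(N_t\equiv r'\pmod N)\le1-e^{-1}
\]
uniformly in $r'$. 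This completes the argument.

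The only thing to check carefully is the independence of $N_t$ from $X_0$ and the rule's decisions, which holds by the construction of the Poisson clocks in \cref{sec:rules}. No monotonicity or locality of the rule is used.
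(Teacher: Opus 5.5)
Your proof is correct and is essentially the paper's argument: the paper conditions on the history up to time $t-1/m$ and uses that the Poisson$(1)$ number of events in the final window of length $1/m$ equals $0$ or $1$ with probability $e^{-1}$ each, while the load sum modulo $N$ rules out both outcomes being flat. Your splitting $N_t=P_1+P_2$, with $P_2$ in the role of that final-window count, is the same step, just conditioned on $X_0$ alone and phrased as a residue-class bound for a Poisson variable.
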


\begin{proof}
Condition on the process up to time $t-1/m$. The number of events in the final interval is Poisson with mean one and is independent of the past, so the events of zero and one arrival each have conditional probability $e^{-1}$. The phase $\sum_vX(v)\pmod N$ increases by one at every event, whereas a flat profile has phase zero. Given the past, at most one of the zero-event and one-event outcomes can therefore be flat. The terminal profile is nonflat with conditional probability at least $e^{-1}$, and averaging over the past proves the claim.
\end{proof}

A useful consequence of \cref{thm:transport-volume} is obtained by setting
\[
 I=\frac mN\int_0^T\frac{ds}{V_d(R(s))}.
 \tag{5.12}\label{eq:I-def}
\]

\begin{corollary}[From transport volume to a gap]\label{cor:transport-gap}
Assume $q(s)\le1/8$ for $0\le s\le T$, and let $t\ge\max\{T,1/m\}$. Then
\[
 \E\Gap(X_t)\ge\frac{\sqrt I}{32},
 \qquad
 \Pp\!\left(\Gap(X_t)\ge\frac{\sqrt I}{32}\right)\ge\frac18.
 \tag{5.13}\label{eq:I-gap}
\]
\end{corollary}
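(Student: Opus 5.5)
Both claims in \eqref{eq:I-gap} follow from one probability statement,
\[
 \Pp\bigl(\Gap(X_t)\ge \sqrt I/32\bigr)\ge \tfrac18 .
\]
Multiplying by $\sqrt I/32$ then gives the expectation bound, since $\E\Gap\ge \frac{\sqrt I}{32}\,\Pp(\Gap\ge\sqrt I/32)\ge \sqrt I/256$. That is weaker than the stated constant, so the expectation claim must be handled separately; see the last paragraph. I will argue by contradiction and split into two regimes: $\sqrt I/32$ small, where \cref{lem:phase} suffices, and $\sqrt I/32$ large, where \cref{thm:transport-volume} applies.

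\emph{Small regime.} If $\sqrt I/32\le1$, then $\{\Gap\ge\sqrt I/32\}\supseteq\{\Gap\ge1\}$ because $\Gap$ is integer valued. (If $\sqrt I/32\le 0$ the claim is trivial.) Since $t\ge1/m$, \cref{lem:phase} gives probability at least $e^{-1}>1/8$.

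\emph{Large regime.} Suppose $\sqrt I/32>1$ and $\Pp(\Gap(X_t)\ge\sqrt I/32)<1/8$. I would choose the integer $M=\lceil\sqrt I/32\rceil\ge2$. By integrality, $\{\Gap>M-1\}=\{\Gap\ge M\}=\{\Gap\ge\sqrt I/32\}$, so $p<1/8$. We have $T\le t$, and $q\le1/8$ holds by hypothesis, so \eqref{eq:transport-volume-simple} gives $M^2\ge I/4$, that is, $M\ge\sqrt I/2$. On the other hand $M<\sqrt I/32+1<2\sqrt I/32=\sqrt I/16$, using $\sqrt I/32>1$. These two bounds contradict each other. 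This proves the tail bound, with a great deal of room in the constant.

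\emph{Expectation.} To reach $\E\Gap\ge\sqrt I/32$ directly, I would use that slack: run the same contradiction at the larger threshold $M=\lceil\sqrt I/4\rceil$. The argument above gives $\Pp(\Gap\ge\sqrt I/4)\ge1/8$ whenever $\sqrt I/4>1$, since $M\le\sqrt I/4+1<\sqrt I/2$ strictly in that case. Then $\E\Gap\ge\frac18\cdot\frac{\sqrt I}{4}=\sqrt I/32$. When $\sqrt I\le4$, I would instead use $\E\Gap\ge e^{-1}\ge\sqrt I/32$, which holds because $\sqrt I/32\le1/8<e^{-1}$. The only delicate point in the whole argument is the bookkeeping: matching the event $\Gap>M-1$ with $\Gap\ge$ threshold via integrality, and checking the strict inequalities at the regime boundaries. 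Since the tail statement at $\sqrt I/4$ implies the one at $\sqrt I/32$, I would present the argument once, at the threshold $\sqrt I/4$.
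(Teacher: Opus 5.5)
Your proof is correct and follows essentially the paper's route. In the large-$I$ regime you apply \eqref{eq:transport-volume-simple} contrapositively to force a tail of probability at least $1/8$, and you fall back on \cref{lem:phase} when the threshold is below one. The difference is only bookkeeping: the paper gets the expectation bound from Markov's inequality with the real parameter $M=8\E\Gap(X_t)+1$, while you take the integer $M=\lceil\sqrt I/4\rceil$ and use integrality of the gap to obtain $\Pp(\Gap(X_t)\ge\sqrt I/4)\ge1/8$. That single tail bound gives both claims at once and avoids the paper's separate $\E\Gap=0$ and $\E\Gap=\infty$ cases.
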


\begin{proof}
Let $Q=\sqrt I/2$. If $0<b=\E\Gap(X_t)<\infty$, Markov's inequality with $M=8b+1$ gives $p\le1/8$; if $b=0$, take $M=1$. Equation \eqref{eq:transport-volume-simple} then gives $M\ge Q$, so $b\ge(Q-1)/8$. For $Q\ge2$ this is at least $Q/16$, while for $Q<2$ the expectation bound follows from \cref{lem:phase}. If the expectation is infinite, there is nothing to prove.

For the probability estimate, suppose first that $Q\ge4$ and take $M=Q/2$. If $p\le1/8$, then \eqref{eq:transport-volume-simple} would give $M\ge Q$, a contradiction. Thus
\[
 \Pp(\Gap(X_t)>Q/2-1)>1/8,
\]
and $Q/2-1\ge Q/4$. If $Q<4$, \cref{lem:phase} gives the claim because $Q/16<1$.
\end{proof}

\section{The cycle}\label{sec:cycle}

We first combine diffusive displacement with a fixed comparison radius.

\begin{lemma}[Cycle quantile bound]\label{lem:cycle-quantile}
Let $24\le r\le\lfloor n/2\rfloor$, put $R=\lfloor r/3\rfloor$, and assume
\[
 t\ge\frac{R^2}{64\pi^2}.
\]
If $M\ge1$ and
\[
 \Pp(\Gap(X_t)>M-1)\le\frac18,
\]
then
\[
 M\ge\frac{\sqrt r}{64\pi}.
 \tag{6.1}\label{eq:cycle-quantile}
\]
\end{lemma}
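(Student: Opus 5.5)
The plan is to apply \cref{thm:transport-volume} on $C_n$ in its simplified form \eqref{eq:transport-volume-simple}, with a constant radius $R(s)\equiv R$. I take $d=\dist_{C_n}$ and $\psi(i)=i+r$. Since $r\le\lfloor n/2\rfloor$, the cyclic distance satisfies $d(i,\psi(i))=r\ge 3R\ge 2R$, so \eqref{eq:separation} holds. On the cycle $m=N=n$, so $m/N=1$. Balls have $V_d(R)=2R+1\le 3R$, using $R\ge1$.

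The tail envelope \eqref{eq:tail-envelope} comes from Chebyshev's inequality applied to \cref{prop:cycle-diffusion}, which is uniform in the starting profile:
\[
 \Pp(\dist(J_s,J_0)\ge R)\le\frac{2\pi^2 s+2}{R^2}.
\]
The right side is at most $1/8$ exactly when $s\le R^2/(16\pi^2)-1/\pi^2$. Since $r\ge24$ gives $R\ge8$, we have $R^2/(32\pi^2)\ge 1/\pi^2$. Hence every $s\le T:=R^2/(64\pi^2)$ satisfies $q(s)\le1/8$. The hypothesis $t\ge R^2/(64\pi^2)$ is precisely $T\le t$, and $p\le1/8$ is assumed, so \eqref{eq:transport-volume-simple} applies.

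Plugging in gives
\[
 M^2\ge\frac14\cdot\frac{T}{3R}=\frac{R}{768\pi^2}.
\]
Next, $R=\lfloor r/3\rfloor\ge r/3-1\ge r/4$ whenever $r\ge12$. This yields $M^2\ge r/(3072\pi^2)$, and $\sqrt{3072}<64$ gives \eqref{eq:cycle-quantile}.

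No step looks like a real obstacle. The only care needed is the constant bookkeeping in choosing $T$ so that both $q\le1/8$ and $T\le t$ hold. One should also confirm that $d(i,i+r)=r$ uses $r\le n/2$, so that wrap-around does not shorten the distance.
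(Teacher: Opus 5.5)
Your proof is correct and matches the paper's: apply \eqref{eq:transport-volume-simple} with $\psi(i)=i+r$, the cycle metric, constant radius $R$, horizon $T=R^2/(64\pi^2)$, and the tail bound obtained from \cref{prop:cycle-diffusion} by Markov's inequality. The only difference is constant bookkeeping. You use $q\le1/8$ and $2R+1\le 3R$ to get $M^2\ge r/(3072\pi^2)$, whereas the paper uses $q\le1/16$ and $2R+1\le r$ to get $M^2\ge r/(4096\pi^2)$; both yield \eqref{eq:cycle-quantile}.
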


\begin{proof}
Take $\psi(i)=i+r\pmod n$, use the cycle metric, and choose the constant radius $R$. By \cref{prop:cycle-diffusion}, for
\[
 0\le s\le T:=\frac{R^2}{64\pi^2},
\]
we have
\[
 q(s)\le\frac{2\pi^2s+2}{R^2}
 \le\frac1{16},
\]
since $R\ge8$. The comparison balls have size $2R+1$, and the two endpoints are separated by at least $3R$. Equation \eqref{eq:transport-volume-simple} gives
\[
 M^2\ge\frac{T}{4(2R+1)}.
\]
Since $R\ge r/4$ and $2R+1\le r$,
\[
 M^2\ge\frac r{4096\pi^2}.
\]
\end{proof}

\begin{proof}[Proof of \cref{thm:intro-cycle}]
Set
\[
 \ell=\min\{n,\sqrt t\},
 \qquad
 \alpha=\sqrt\ell,
 \qquad
 r=\left\lfloor\min\{n/2,\sqrt t\}\right\rfloor.
\]
If $r\ge24$, then $r\ge\ell/3$, and the time condition in \cref{lem:cycle-quantile} holds. Let $b=\E\Gap(X_t)$. If $0<b<\infty$, Markov's inequality with $M=8b+1$ gives
\[
 \Pp(\Gap(X_t)>M-1)\le\frac18.
\]
If $b=0$, the same conclusion holds with $M=1$.
Hence
\[
 b\ge\frac{\alpha}{512\pi\sqrt3}-\frac18.
 \tag{6.2}\label{eq:cycle-expect-pre}
\]
Let $A=(512\pi\sqrt3)^{-1}$. If $A\alpha\ge1/4$, the right side of \eqref{eq:cycle-expect-pre} is at least $A\alpha/2$. If $A\alpha<1/4$, \cref{lem:phase} gives the stronger bound $b\ge e^{-1}$. The same phase bound covers $r<24$. This proves the expectation statement with $c=A/2$; infinite expectation is immediate.

For the probability estimate, suppose first that $r\ge(256\pi)^2$ and set
\[
 M=\frac{\sqrt r}{128\pi}.
\]
If the upper tail at $M-1$ were at most $1/8$, \cref{lem:cycle-quantile} would force $M\ge\sqrt r/(64\pi)$, a contradiction. Therefore
\[
 \Pp(\Gap(X_t)>M-1)>\frac18.
\]
Since $M\ge2$ and $r\ge\ell/3$,
\[
 M-1\ge\frac{\sqrt r}{256\pi}
 \ge\frac{\alpha}{256\pi\sqrt3}
 \ge c\alpha.
\]
If $r<(256\pi)^2$, then $c\alpha<1$, and \cref{lem:phase} proves the same probability bound.

For an invariant normalized law, start the process with that law and take $t\ge n^2$.
\end{proof}

\subsection{A deterministic number of allocations}

Recall that $\mathsf P$ is the one-event transition operator. The continuous-time semigroup is
\[
 P_t=\exp(mt(\mathsf P-I)).
\]
The discrete martingale bracket contains a mean-increment correction that has no continuous-time analogue.

\begin{lemma}[Discrete variance correction]\label{lem:discrete-correction}
Fix a vertex $i$ and a vertex $\psi(i)\ne i$. Let
\[
 f(x)=\clip_{[-1,1]}\!\left(\frac{x_i-x_{\psi(i)}}{M}\right),
\]
and define
\[
 \gamma_h(x)=\sum_v\lambda_v(x)|D_v\mathsf P^hf(x)|^2.
\]
Then
\[
 \mathsf P[(\mathsf P^hf)^2](x)-(\mathsf P^{h+1}f(x))^2
 =\frac{\gamma_h(x)}m-
 |(\mathsf P-I)\mathsf P^hf(x)|^2.
 \tag{6.3}\label{eq:discrete-var}
\]
If $G$ has maximum degree $\Delta$, then
\[
 |(\mathsf P-I)\mathsf P^hf(x)|
 \le\frac{2\Delta}{mM}.
 \tag{6.4}\label{eq:mean-increment}
\]
\end{lemma}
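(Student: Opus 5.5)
Both parts reduce to one-step computations for the event chain. For \eqref{eq:discrete-var} I will read the left side as a one-step conditional variance. For \eqref{eq:mean-increment} I will commute $\mathsf P-I$ past $\mathsf P^h$ and use that $\mathsf P^h$ is a sup-norm contraction.

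\emph{Step 1: the identity \eqref{eq:discrete-var}.} Write $g=\mathsf P^hf$; it is bounded because $|f|\le1$. From a profile $x$, one allocation event moves the chain to $x+e_W$, where $W$ is the random vertex receiving the ball. By \eqref{eq:rates} its law is $\Pp(W=v)=\lambda_v(x)/m$: each edge rings with probability $1/m$, and the fresh mark then selects an endpoint. Hence
\[
 \mathsf P[g^2](x)-(\mathsf Pg(x))^2=\Var\bigl(g(x+e_W)\bigr).
\]
Subtracting the constant $g(x)$ does not change the variance, and $g(x+e_W)-g(x)=D_Wg(x)$. Therefore
\[
 \Var\bigl(D_Wg(x)\bigr)=\sum_v\frac{\lambda_v(x)}m|D_vg(x)|^2-\Bigl(\sum_v\frac{\lambda_v(x)}m D_vg(x)\Bigr)^2 .
\]
The first term is $\gamma_h(x)/m$. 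The second is $|(\mathsf P-I)g(x)|^2$, because $(\mathsf P-I)g(x)=\E g(x+e_W)-g(x)$. This proves \eqref{eq:discrete-var}.

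\emph{Step 2: the bound \eqref{eq:mean-increment}.} Powers of the same operator commute, so $(\mathsf P-I)\mathsf P^hf=\mathsf P^h(\mathsf P-I)f$. As a Markov operator, $\mathsf P^h$ does not increase the sup norm. It therefore suffices to bound $(\mathsf P-I)f$ pointwise. By Step 1 with $h=0$,
\[
 (\mathsf P-I)f(x)=\sum_v\frac{\lambda_v(x)}mD_vf(x).
\]
Since $f$ depends only on $x_i$ and $x_{\psi(i)}$, only $v\in\{i,\psi(i)\}$ contribute. The clip is $1$-Lipschitz, so $|D_vf|\le1/M$ for each of these vertices, consistent with \eqref{eq:clipped-derivatives}. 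Also $\lambda_v(x)\le\deg(v)\le\Delta$. Hence $|(\mathsf P-I)f|\le 2\Delta/(mM)$, and the contraction step transfers this bound to $(\mathsf P-I)\mathsf P^hf$.

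I see no real obstacle in this plan. The only point to state explicitly is that the one-event jump law is exactly $\lambda_v(x)/m$. This holds because every edge is equally likely to ring and each ring allocates exactly one ball, which is the normalization behind \eqref{eq:rates}.
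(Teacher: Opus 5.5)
Your proof is correct. Step~1 matches the paper's argument: both read the left side of \eqref{eq:discrete-var} as the variance of the one-step increment $D_Wg(x)$, where $\Pp(W=v)=\lambda_v(x)/m$.

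Step~2 reaches the same intermediate representation, $m(\mathsf P-I)\mathsf P^hf(x)=\E_x\sum_v\lambda_v(X^{[h]})D_vf(X^{[h]})$, but by a more elementary route. The paper first rewrites $\sum_v\lambda_v(x)D_v\mathsf P^hf(x)$ using the tag identity \eqref{eq:derivative-tag-discrete}. It then moves the rates to the terminal profile with the discrete Palm link \eqref{eq:palm-link}. You instead commute $(\mathsf P-I)\mathsf P^h=\mathsf P^h(\mathsf P-I)$ and use that $\mathsf P^h$ is a sup-norm contraction.

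Your version needs neither the synchronous coupling nor \cref{prop:palm}. The bound therefore holds for any rule that places each ball at an endpoint of the ringing edge, without using endpoint-locality or monotonicity. In fact, for test functions $F(y,w)=D_wf(y)$, the Palm link combined with \eqref{eq:derivative-tag-discrete} says exactly $(\mathsf P-I)\mathsf P^hf=\mathsf P^h(\mathsf P-I)f$. So the paper's detour adds nothing for this lemma, beyond keeping the estimate inside the Palm framework used in the surrounding proofs.
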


\begin{proof}
Write $g=\mathsf P^hf$. Since a one-event update allocates to $v$ with probability $\lambda_v(x)/m$,
\[
 \mathsf Pg(x)=g(x)+\frac1m\sum_v\lambda_v(x)D_vg(x).
\]
Expanding $\mathsf P(g^2)- (\mathsf Pg)^2$ cancels the constant and linear terms and gives \eqref{eq:discrete-var}.

For \eqref{eq:mean-increment}, the discrete Palm link and \eqref{eq:derivative-tag-discrete} give
\[
 m(\mathsf P-I)\mathsf P^hf(x)
 =\E_x\sum_v\lambda_v(X^{[h]})D_vf(X^{[h]}).
\]
Only $i$ and $\psi(i)$ contribute. Each corresponding allocation rate is at most $\Delta$, and each derivative has absolute value at most $1/M$.
\end{proof}

\begin{lemma}[Discrete transport-volume inequality]\label{lem:discrete-transport}
Let $\Delta$ be the maximum degree of $G$. Fix an integer $k\ge0$, a permutation $\psi$ of $V$, and an integer $H\le k+1$. For each $0\le h<H$, let $R_h>0$ satisfy
\[
 d(i,\psi(i))\ge2R_h
 \qquad(i\in V),
\]
and suppose that, uniformly in the starting profile, the Palm tag after exactly $h$ allocations obeys
\[
 \Pp(d(J_h,J_0)\ge R_h)\le q_h.
\]
Then, for every $M\ge1$, with
\[
 p=\Pp(\Gap(X^{[k]})>M-1),
\]
one has
\[
 M^2\ge\sum_{h=0}^{H-1}
 \left[
 \frac{[1-p-2q_h]_+^2}{N V_d(R_h)}-
 \frac{4\Delta^2}{m^2}
 \right]_+.
 \tag{6.5}\label{eq:discrete-transport}
\]
If $p,q_h\le1/8$ and
\[
 V_d(R_h)\le\frac{m^2}{32N\Delta^2},
 \tag{6.6}\label{eq:absorb-discrete}
\]
then the $h$th summand in \eqref{eq:discrete-transport} is at least $1/(8N V_d(R_h))$.
\end{lemma}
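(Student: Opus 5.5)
We mirror the proof of \cref{thm:transport-volume}, replacing the continuous martingale bracket by its discrete counterpart. The discrete correction of \cref{lem:discrete-correction} accounts for the subtracted term $4\Delta^2/m^2$.

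\emph{Step 1: the variance budget.} Fix $i$ and put $f=f_i$. Consider the discrete martingale $M_u=\mathsf P^{k-u}f(X^{[u]})$ for $0\le u\le k$. Its increments are orthogonal, so
\[
 \Var f(X^{[k]})\ge\sum_{u=0}^{k-1}\E\bigl[\mathsf P[(\mathsf P^{h}f)^2]-(\mathsf P^{h+1}f)^2\bigr](X^{[u]}),\qquad h=k-u-1.
\]
Apply \eqref{eq:discrete-var} and \eqref{eq:mean-increment} to each summand. This gives
\[
 \sum_{h=0}^{k-1}\E\Bigl[\frac{\gamma_h(X^{[k-1-h]})}{m}-\frac{4\Delta^2}{m^2M^2}\Bigr]\le 1 .
\]
We then average over $i$.

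There is an index shift to reconcile with $H\le k+1$. The lag $h$ response is evaluated at time $k-1-h$, so $h$ ranges up to $k-1$ from this bracket. I would instead write the bracket with terminal function $\mathsf P^{h}$ acting after an extra step, that is, evaluate $\gamma_h$ at $X^{[k-h-1]}$ and apply $D_v$ after the next allocation. Alternatively one can note that lag $H-1=k$ is harmless. Either choice must be checked against the statement's convention. Each summand is nonnegative, being a conditional variance, so we may drop the terms with $h\ge H$ and replace each remaining summand by its positive part.

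\emph{Step 2: the mean response per lag.} For lag $h$, start the coupled Palm tag at time $k-h$ via \eqref{eq:palm-init}. The discrete form of \eqref{eq:palm-link} and \eqref{eq:derivative-tag-discrete} convert $S_h=\frac1N\sum_i\E\sum_{v\in A_i}\lambda_vD_v\mathsf P^hf_i$ into a terminal expectation. The argument from \eqref{eq:positive-credit}--\eqref{eq:mean-response} then goes through verbatim. The two exceptional families each require displacement at least $R_h$, and each is counted once per terminal vertex. This yields
\[
 S_h\ge\frac{m}{NM}[1-p-2q_h].
\]

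\emph{Step 3: Cauchy--Schwarz.} Use the same measure of total mass at most $mV_d(R_h)/N$. This gives
\[
 \frac1N\sum_i\E\gamma_{i,h}\ge\frac{m[1-p-2q_h]_+^2}{NM^2V_d(R_h)}.
\]
Dividing by $m$ and inserting into Step 1 gives
\[
 \sum_h\Bigl(\frac{[1-p-2q_h]_+^2}{NM^2V_d}-\frac{4\Delta^2}{m^2M^2}\Bigr)_+\le1,
\]
which is \eqref{eq:discrete-transport} after multiplying by $M^2$.

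\emph{Step 4: the final assertion.} If $p,q_h\le1/8$, the first term is at least $1/(4NV_d)$. Under \eqref{eq:absorb-discrete} we have $4\Delta^2/m^2\le 1/(8NV_d)$. The difference is therefore at least $1/(8NV_d)$.

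The main obstacle is Step 1. We must align the time indexing so that the lag-$h$ energy is evaluated at precisely the profile at which the Palm tag of lag $h$ is started. We must also justify taking the positive part termwise. This uses the nonnegativity of each conditional variance, so the negative correction can never push a summand below zero in the budget.
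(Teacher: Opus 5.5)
Your architecture is the paper's: a Doob-martingale variance budget, the correction from \cref{lem:discrete-correction}, the translated two-point count, and Cauchy--Schwarz on the restricted Palm measure. Steps 2--4 are fine as stated. The gap is the time alignment you flagged in Step 1 but left unresolved. As written, Steps 1 and 2 do not connect.

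With $M_u=\mathsf P^{k-u}f(X^{[u]})$, the increment from $u$ to $u+1$ has conditional variance $c_h(X^{[u]})$ with $h=k-1-u$. So the lag-$h$ energy sits at $X^{[k-1-h]}$, and by \eqref{eq:derivative-tag-discrete} its $h$-step Palm tag terminates at $X^{[k-1]}$. That is not $X^{[k]}$, where $p$ is defined, and only the lags $0\le h\le k-1$ appear. Your Step 2, however, starts the tag at time $k-h$, so it lower-bounds $\E\gamma_{i,h}(X^{[k-h]})$, which is not the quantity your budget controls.

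Neither suggested repair closes this. Your first option still places $\gamma_h$ at $X^{[k-h-1]}$, which is exactly the misaligned profile. The lag $h=k$ is not ``harmless'': for $H=k+1$ the corresponding summand on the right of \eqref{eq:discrete-transport} is strictly positive, and your bracket has no martingale increment that pays for it.

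The fix, which is what the paper does, is to run the Doob martingale one step past the time where $p$ is measured. Take terminal variable $f_i(X^{[k+1]})$, i.e.\ $Y_u=\mathsf P^{k+1-u}f_i(X^{[u]})$ for $0\le u\le k+1$. The increment from $u$ to $u+1$ then has conditional variance $c_{i,k-u}(X^{[u]})$. So lag $h=k-u$ ranges over $0,\dots,k$, which is why $H\le k+1$ is allowed. Each lag is evaluated at $X^{[k-h]}$, whose unperturbed $h$-step Palm base ends at $X^{[k]}$, where the gap tail $p$ lives.

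The budget is unaffected: $\sum_{h<H}C_h\le N^{-1}\sum_i\Var f_i(X^{[k+1]})\le1$, since only $|f_i|\le1$ is used. Equivalently, run your Step 1 verbatim with $k+1$ in place of $k$. With this alignment the rest of your argument goes through as written, including the termwise positive parts justified by $C_h\ge0$.
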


\begin{proof}
For each $i\in V$, let
\[
 f_i(x)=\clip_{[-1,1]}\!\left(\frac{x_i-x_{\psi(i)}}M\right).
\]
Use the Doob martingale with terminal variable $f_i(X^{[k+1]})$. For $0\le h\le k$, define
\[
 c_{i,h}(x)
 =\mathsf P[(\mathsf P^hf_i)^2](x)
   -(\mathsf P^{h+1}f_i(x))^2.
\]
The increment at time $k-h$, conditioned on $X^{[k-h]}$, has variance
$c_{i,h}(X^{[k-h]})$. Set
\[
 C_h=\frac1N\sum_i\E c_{i,h}(X^{[k-h]}).
\]
Each $C_h$ is nonnegative because it is an averaged conditional variance. The indices $h=0,\ldots,H-1$ correspond to distinct martingale increments, so the martingale isometry and $|f_i|\le1$ give
\[
 \sum_{h=0}^{H-1}C_h
 \le\frac1N\sum_i\Var(f_i(X^{[k+1]}))
 \le1.
 \tag{6.7}\label{eq:discrete-bracket-budget}
\]
The terminal time $k+1$ aligns the response correctly: the lag-$h$ derivative is evaluated at $X^{[k-h]}$, and its unperturbed $h$-step Palm base terminates at $X^{[k]}$, where the gap tail $p$ is measured.

Write
\[
 \gamma_{i,h}(x)=\sum_v\lambda_v(x)
 |D_v\mathsf P^hf_i(x)|^2.
\]
The event-count derivative identity and Palm link from \cref{prop:palm} give, with $A_i=B(i,R_h)$,
\[
 S_h:=\frac1N\sum_i\E\sum_{v\in A_i}
 \lambda_v(X^{[k-h]})D_v\mathsf P^hf_i(X^{[k-h]})
 =\frac mN\sum_i\E\bigl[
 \one_{\{J_0\in A_i\}}D_{J_h}f_i(X^{[k]})
 \bigr].
\]
We now carry out the translated-test count. Without the restriction
$J_0\in A_i$, the positive terminal contribution is
\[
 \frac mN\sum_i\E\!
 \left[\one_{\{J_h=i\}}D_if_i(X^{[k]})\right].
\]
By \eqref{eq:conditional-palm} and \eqref{eq:rates}, this expression equals
\[
 \frac1N\E\sum_i\lambda_i(X^{[k]})D_if_i(X^{[k]})
 \ge \frac m{NM}(1-p).
\]
Indeed, on $\{\Gap(X^{[k]})\le M-1\}$ one has $D_if_i(X^{[k]})=1/M$ and $\sum_i\lambda_i(X^{[k]})=m$.
Restricting to $J_0\in A_i$ can remove positive credit only on
\[
 \{J_h=i,\ J_0\notin A_i\},
\]
which implies $d(J_h,J_0)\ge R_h$.  The only negative terminal derivative is
at $\psi(i)$; its contribution with $J_0\in A_i$ is supported on
\[
 \{J_h=\psi(i),\ J_0\in A_i\},
\]
and this event also implies $d(J_h,J_0)\ge R_h$ because
$d(i,\psi(i))\ge2R_h$. Since both $i\mapsto i$ and
$i\mapsto\psi(i)$ are permutations, each exceptional family has total cost
at most $mq_h/(NM)$. Therefore
\[
 S_h\ge\frac m{NM}(1-p-2q_h).
\]
Cauchy--Schwarz on the restricted Palm measure, whose total mass is at most $mV_d(R_h)/N$, yields
\[
 \frac1N\sum_i\E\gamma_{i,h}(X^{[k-h]})
 \ge
 \frac m{NM^2V_d(R_h)}[1-p-2q_h]_+^2.
 \tag{6.8}\label{eq:discrete-energy-lower}
\]
By \cref{lem:discrete-correction},
\[
 C_h
 =\frac1{mN}\sum_i\E\gamma_{i,h}(X^{[k-h]})
 -\frac1N\sum_i\E
 |(\mathsf P-I)\mathsf P^hf_i(X^{[k-h]})|^2.
\]
Using \eqref{eq:mean-increment} and \eqref{eq:discrete-energy-lower},
\[
 C_h\ge\frac1{M^2}
 \left(
 \frac{[1-p-2q_h]_+^2}{NV_d(R_h)}
 -\frac{4\Delta^2}{m^2}
 \right).
\]
Since $C_h\ge0$, it is at least the positive part of the right-hand side. Summing over $h$ and applying \eqref{eq:discrete-bracket-budget} proves \eqref{eq:discrete-transport}. Under the final two hypotheses, the first term in parentheses is at least $1/(4NV_d(R_h))$, while \eqref{eq:absorb-discrete} makes the subtraction at most $1/(8NV_d(R_h))$.
\end{proof}

\begin{lemma}[One-step nonflatness]\label{lem:discrete-small}
On $C_n$, for every initial law and every $k\ge1$,
\[
 \Pp(\Gap(X^{[k]})\ge1)\ge1-\frac{2}{n}\ge\frac13.
 \tag{6.9}\label{eq:one-step-nonflat}
\]
\end{lemma}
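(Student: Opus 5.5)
The plan is to condition on the state just before the last allocation and use only that the edge at each allocation event is uniform over the $n$ edges of $C_n$, independent of the past. This holds because all edges ring at rate one, so the embedded jump chain picks each edge with probability $1/n$, independently of the history and of the initial law. Monotonicity of the rule plays no role; only the fact that a ball lands at an endpoint of the sampled edge is used. Since $k\ge1$, write $Y=X^{[k-1]}$, so that $X^{[k]}=Y+e_W$, where $W$ is an endpoint of the edge sampled at the $k$th event.

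First I would identify when the terminal profile can be flat. The vector $Y+e_W$ is a constant multiple of $\mathbf 1$ exactly when $Y=c\mathbf 1-e_W$ for some integer $c$. So flatness can occur only if $Y$ has a unique coordinate $w^\ast$ that is one below all the others, and only if the ball goes to $W=w^\ast$. If $Y$ is not of this form (in particular if $Y$ is already flat, since $n\ge3$), then $\Gap(X^{[k]})\ge1$ with conditional probability one.

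Next, on the event that $Y$ has this form, I would bound the conditional probability of flatness. The ball can land at $w^\ast$ only if the sampled edge is incident to $w^\ast$. On the cycle there are exactly two such edges among $n$, so whatever the selection probabilities $p_e$ and the fresh mark are, the conditional probability is at most $2/n$. Averaging over the law of $Y$, which is arbitrary, gives $\Pp(\Gap(X^{[k]})=0)\le 2/n$. Finally, $n\ge3$ gives $1-2/n\ge1/3$.

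There is no real obstacle here. The only point needing care is justifying that, conditional on $Y$ and on the entire past including the initial law, the $k$th sampled edge is uniform on $E$. This follows from the independence of the edge clocks and the marks from the initial configuration, exactly as assumed in \cref{thm:intro-discrete}.
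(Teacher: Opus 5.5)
Your proposal is correct and follows the paper's own argument. Flatness after the $k$th allocation requires $X^{[k-1]}$ to have a unique vertex one unit below all others and the ball to land there, which has conditional probability at most $\lambda_{w^\ast}/m\le 2/n$. A flat profile has no flat successor. Your explicit justification that the sampled edge is uniform and independent of the past fills in a step the paper leaves implicit.
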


\begin{proof}
A nonflat profile can become flat in one update only when exactly one vertex is one unit below all the others, and the ball is allocated to that unique deficient vertex. Its selection probability is at most $2/n$. A flat profile has no flat one-step successor.
\end{proof}

\begin{proof}[Proof of \cref{thm:intro-discrete}]
Set
\[
 \ell=\min\{n,\sqrt{k/n}\},
 \qquad
 \alpha=\sqrt\ell,
 \qquad
 R=\left\lfloor\min\{n/512,\sqrt{k/n}\}\right\rfloor.
\]
Suppose first that $R\ge8$. Let
\[
 H=\left\lfloor\frac{nR^2}{64\pi^2}\right\rfloor,
 \qquad
 \psi(i)=i+\lfloor n/2\rfloor\pmod n.
\]
Then $1\le H\le k$, and the condition $R\le n/512$ implies $2R+1\le n/128$. The event-count version of \cref{prop:cycle-diffusion} gives
\[
 q_h\le\frac{2\pi^2h/n+2}{R^2}\le\frac1{16}
 \qquad(0\le h<H).
\]
If $p\le1/8$, \cref{lem:discrete-transport}, with $m=N=n$ and $\Delta=2$, gives
\[
 M^2\ge\frac{H}{8n(2R+1)}.
\]
Since $R\ge8$ and $R\le n/512$, the number $nR^2/(64\pi^2)$ exceeds two. Hence
\[
 H\ge\frac{nR^2}{128\pi^2},
\]
and therefore
\[
 M^2\ge\frac{R}{3072\pi^2}.
\]
The definition of $R$ and the assumption $R\ge8$ imply $R\ge\ell/1024$. Thus
\[
 M^2\ge\frac{\ell}{3\cdot2^{20}\pi^2}.
 \tag{6.10}\label{eq:discrete-quantile}
\]
Let
\[
 Q=\frac{\alpha}{1024\pi\sqrt3}.
\]
Equation \eqref{eq:discrete-quantile} says that $p\le1/8$ forces $M\ge Q$.

For $0<b=\E\Gap(X^{[k]})<\infty$, use $M=8b+1$; if $b=0$, take $M=1$. When $Q\ge2$, this gives $b\ge(Q-1)/8\ge Q/16$, proving the expectation estimate with $c'=1/(16384\pi\sqrt3)$.

For the probability estimate, when $Q\ge4$ take $M=Q/2$. The assumption $p\le1/8$ would contradict \eqref{eq:discrete-quantile}; hence
\[
 \Pp(\Gap(X^{[k]})>Q/2-1)>\frac18,
\]
and $Q/2-1\ge Q/4\ge c'\alpha$. In every complementary parameter range ($Q<4$ or $R<8$), the claimed threshold is below $1/3$, so \cref{lem:discrete-small} proves both assertions.
\end{proof}

\section{Other graphs}\label{sec:graphs}

The projection argument gives displacement bounds in any Hilbert embedding. This is the geometric input to \cref{thm:transport-volume}; the theorem itself does not assert graph-distance diffusion on arbitrary graphs.

\begin{lemma}[Hilbert displacement]\label{lem:hilbert-displacement}
Let an endpoint-local monotone rule run on $G$, initialize the tag according to \eqref{eq:palm-init}, and let $F:V\to \mathcal H$ satisfy
\[
 \norm{F(u)-F(v)}\le \eta
 \qquad(uv\in E).
\]
If $\Delta$ is the maximum degree, then, uniformly over the starting profile,
\[
 \E\norm{F(J_s)-F(J_0)}^2
 \le2\eta^2\Delta(\Delta+1)s+2\eta^2.
 \tag{7.1}\label{eq:hilbert-diffusion}
\]
After exactly $h$ events, the same estimate holds with $s$ replaced by $h/m$.
\end{lemma}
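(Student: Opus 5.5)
We mimic the proof of \cref{prop:cycle-diffusion}, replacing the Fourier coordinate by the Hilbert-valued coordinate $F$. Condition on a base path with $h$ allocation events and use the conditional mark representation of \cref{prop:palm}. To each mark $a=(e,U)$ with $e=\{u,v\}$ we assign the edge midpoint
\[
 g(a)=\tfrac12\bigl(F(u)+F(v)\bigr)\in\mathcal H.
\]
This function does not depend on $U$. Since the selected endpoint $\sigma_x(a)$ is one of $u,v$, we have $\norm{F(\sigma_x(a))-g(a)}\le\eta/2$. Consequently
\[
 \norm{F(J_h)-F(J_0)}\le\norm{g(B_h)-g(B_0)}+\eta,
\]
and $(x+y)^2\le2x^2+2y^2$ gives
\[
 \E\norm{F(J_h)-F(J_0)}^2\le2\E\norm{g(B_h)-g(B_0)}^2+2\eta^2.
\]
This is the source of the additive term $2\eta^2$.

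Next we bound $\norm{(I-Q_j)g}_{L^2(\rho)}^2$ for a single update that selects a vertex $w$. By \cref{prop:palm}, $Q_j$ is the identity outside the union $U_w=\bigcup_{y\in N[w]}C_y(x)$. Inside $U_w$, it is conditional expectation onto the new selection cells $C_y(x+e_w)$ with $y\in N[w]$, and these cells partition $U_w$. For a cell $C_y$, every mark in it lies on an edge incident to $y$. Any two such midpoints differ by at most $\eta$, since $\norm{F(y)-F(z)}/2+\norm{F(y)-F(z')}/2\le\eta$. Hence the conditional variance of $g$ on $C_y$ is at most $\eta^2/4$, because a Hilbert-valued variable whose values lie in a set of diameter $\eta$ has variance at most $\eta^2/4$. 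It follows that
\[
 \norm{(I-Q_j)g}^2\le\frac{\eta^2}{4}\,\rho(U_w).
\]
The union $U_w$ is contained in the edge fibers incident to vertices of $N[w]$. There are at most $(\Delta+1)\Delta$ such fibers, each of $\rho$-mass $1/m$, so
\[
 \norm{(I-Q_j)g}^2\le\frac{\eta^2\Delta(\Delta+1)}{4m}.
\]

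Applying the Hilbert-valued form of \eqref{eq:projection-chain} over $h$ updates gives
\[
 \E\norm{g(B_h)-g(B_0)}^2\le4h\cdot\frac{\eta^2\Delta(\Delta+1)}{4m}=\frac{\eta^2\Delta(\Delta+1)h}{m}.
\]
Combining this with the first step yields
\[
 \E\norm{F(J_h)-F(J_0)}^2\le\frac{2\eta^2\Delta(\Delta+1)h}{m}+2\eta^2,
\]
which is the event-count statement. This bound holds conditionally on every base path and hence uniformly over the starting profile.

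For physical time $s$, the number of events is Poisson with mean $ms$, independent of the selections' hidden marks in the required way. Averaging the event-count bound, which is linear in $h$, gives \eqref{eq:hilbert-diffusion}.

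The main care point is justifying the conditional variance bound on the cells. The new cells $C_y(x+e_w)$ for $y\in N[w]$ consist of marks on edges incident to $y$, which is exactly what the cell-based diameter argument uses. One also has to check that conditioning on the base path does not affect the application of \cref{lem:projections}. This holds because each $Q_j$ is a $\rho$-preserving orthogonal projection and $B_0\sim\rho$.
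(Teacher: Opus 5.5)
Your proof follows the paper's argument step for step: the edge-midpoint coordinate $g$, the $\eta/2$ endpoint-to-midpoint error giving the additive $2\eta^2$, at most $\Delta(\Delta+1)$ affected fibers of mass $1/m$, the Hilbert-valued form of \eqref{eq:projection-chain}, and averaging over the Poisson event count. One justification is wrong, though. You claim that a Hilbert-valued variable with values in a set of diameter $\eta$ has variance at most $\eta^2/4$. That is Popoviciu's bound, and it holds only on the line. The uniform law on the vertices of an equilateral triangle of side $\eta$ has variance $\eta^2/3$. In general, diameter alone gives only $\Var\le\tfrac12\E\norm{X-X'}^2\le\eta^2/2$. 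With that bound the linear coefficient doubles, and you get $4\eta^2\Delta(\Delta+1)s+2\eta^2$ rather than \eqref{eq:hilbert-diffusion}.

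The fix is the paper's, and you already wrote down the ingredient. Every mark in a new cell $C_y(x+e_w)$ lies on an edge incident to $y$, so $\norm{g(a)-F(y)}\le\eta/2$ on that cell. In other words, the cell's image lies in the ball of radius $\eta/2$ about $F(y)$. The conditional mean minimizes mean squared distance, so the conditional variance of $g$ on $C_y$ is at most $\E[\norm{g(a)-F(y)}^2\mid a\in C_y]\le\eta^2/4$. Replace your diameter argument with this centered-ball argument and the rest of your proof goes through unchanged with the stated constants.
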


\begin{proof}
Assign to a marked edge $e=uv$ its midpoint
\[
 g(e)=\frac{F(u)+F(v)}2.
\]
Every selection cell at $v$ lies in the Hilbert ball of radius $\eta/2$ around $F(v)$, so its conditional variance is at most $\eta^2/4$. At any update $j$, if the selected vertex is $w$, the affected cells are supported on edge fibers incident to $N[w]$. There are at most $\Delta(\Delta+1)$ such fibers, and each has $\rho$-mass $1/m$. Hence
\[
 \norm{(I-Q_j)g}_{L^2(\rho;\mathcal H)}^2
 \le\frac{\eta^2\Delta(\Delta+1)}{4m}.
\]
By \cref{lem:projections}, the expected squared midpoint displacement after $h$ events is at most $\eta^2\Delta(\Delta+1)h/m$. The initial and final endpoint-to-midpoint errors have total norm at most $\eta$, so
\[
 \norm{F(J_h)-F(J_0)}^2
 \le2\norm{g(B_h)-g(B_0)}^2+2\eta^2.
\]
Average over the Poisson event count for \eqref{eq:hilbert-diffusion}.
\end{proof}

Suppose a pseudometric $d$ satisfies
\[
 d(u,v)\le D\norm{F(u)-F(v)},
 \qquad
 \norm{F(u)-F(v)}\le1\quad(uv\in E).
 \tag{7.2}\label{eq:embedding-distortion}
\]
Then Markov's inequality and \cref{lem:hilbert-displacement} give the tail envelope
\[
 q(s)\le
 \min\left\{1,
 \frac{D^2(2\Delta(\Delta+1)s+2)}{R(s)^2}
 \right\}.
 \tag{7.3}\label{eq:embedding-tail}
\]
For a constant $R\ge8D$, one may take
\[
 T=\frac{R^2}{64D^2\Delta(\Delta+1)},
\]
which makes $q(s)\le1/16$ on $[0,T]$. If a permutation separated by $2R$ exists, then for every $t\ge\max\{T,1/m\}$, \cref{cor:transport-gap} gives the lower scale
\[
 \frac{R}{256D\sqrt{\Delta(\Delta+1)}}
 \sqrt{\frac m{N V_d(R)}}.
 \tag{7.4}\label{eq:single-scale-graph}
\]

For graph distance and integer $R\ge1$, a separated permutation exists whenever
\[
 \max_v|B(v,2R-1)|\le N/2.
 \tag{7.5}\label{eq:hall-condition}
\]
Indeed, form the bipartite graph whose left and right copies of $V$ are joined when their graph distance is at least $2R$. Every vertex has degree at least $N/2$. If a left set $S$ has size at most $N/2$, the neighborhood of any member already has size at least $|S|$. If $|S|>N/2$, every right vertex has a neighbor in $S$, because it has at most $N/2$ nonneighbors. Thus Hall's condition holds; Hall's marriage theorem supplies the required permutation.

\subsection{Finite-width cylinders}

\begin{corollary}[Cycles of finite width]\label{cor:cylinder}
Let $H$ be a finite simple graph with at least one vertex, let $L\ge3$, write $w=|V(H)|$, and put $\Delta=2+\Delta(H)$. On $C_L\square H$, under every endpoint-local monotone rule and every initial law, for all $t\ge L^2$,
\[
 \E\Gap(X_t)\ge c_\Delta\sqrt{L/w},
\]
and
\[
 \Pp\!\left(\Gap(X_t)\ge c_\Delta\sqrt{L/w}\right)\ge\frac18,
\]
where
\[
 c_\Delta=\frac1{2048\pi\sqrt{\Delta(\Delta+1)}}.
\]
The graph $H$ may be disconnected. The same conclusions hold under every invariant normalized law.
\end{corollary}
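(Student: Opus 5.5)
We apply the single-scale graph bound \eqref{eq:single-scale-graph}, obtained from \cref{lem:hilbert-displacement} and \cref{cor:transport-gap}. The pseudometric is $d((a,h),(b,h'))=\dist_{C_L}(a,b)$, which ignores the $H$-coordinate. This is the right choice because $H$ may be disconnected, so graph distance can be infinite, while the cycle coordinate carries all the geometry. For the embedding, take $F(a,h)=\frac{L}{2\pi}e^{2\pi i a/L}\in\mathbb C$. Across a cycle edge, $\norm{F(u)-F(v)}=\frac{L}{\pi}\sin(\pi/L)\le1$. Across an $H$-edge the value of $F$ does not change. Chord versus arc gives $\dist_{C_L}(a,b)\le\frac{L}{2}\cdot\frac{2}{\pi}\cdot\frac{\pi}{L}\cdot|F(u)-F(v)|\cdot\frac{\pi}{2}$; concretely, $|F(u)-F(v)|=\frac{L}{\pi}\sin(\pi\delta/L)\ge\frac{2\delta}{\pi}$ for $\delta\le L/2$. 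So \eqref{eq:embedding-distortion} holds with $D=\pi/2$. The graph data are $N=Lw$, maximum degree at most $\Delta=2+\Delta(H)$, and $m=L|E(H)|+Lw\ge Lw$, so $m/N\ge1$. This is the only place the number of edges enters. The $H$-edges only help, since allocation on them adds no displacement in $d$.

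Next we choose the scale. Take $\psi(a,h)=(a+\lfloor L/2\rfloor,h)$, which is a permutation with $d$-separation $\lfloor L/2\rfloor$. Set $R=\lfloor \lfloor L/2\rfloor/2\rfloor$, so that \eqref{eq:separation} holds. A $d$-ball of radius $R$ has $V_d(R)\le(2R+1)w\le 2Lw$ at most, and we need $V_d(R)\lesssim Rw$. By \eqref{eq:embedding-tail}, $q(s)\le1/16$ on $[0,T]$ with $T=R^2/(64D^2\Delta(\Delta+1))$, provided $R\ge8D=4\pi$. Also $T\le L^2\le t$, and $t\ge1/m$, as \cref{cor:transport-gap} requires. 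Substituting into \eqref{eq:single-scale-graph} gives the lower scale
\[
 \frac{R}{256D\sqrt{\Delta(\Delta+1)}}\sqrt{\frac{1}{(2R+1)w}}\gtrsim\frac{\sqrt{R/w}}{\sqrt{\Delta(\Delta+1)}},
\]
and $R\ge L/5$ for large $L$ yields $c\sqrt{L/w}$. Checking that the constant matches $c_\Delta=1/(2048\pi\sqrt{\Delta(\Delta+1)})$ is routine bookkeeping. We use $D=\pi/2$, $2R+1\le L$, and $R\ge L/4-1$, adjusting at worst by a factor absorbed in the slack of $2048$ against $256\cdot\frac{\pi}{2}\cdot\sqrt{8}$.

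The main obstacle is small parameters, namely $R<4\pi$, or $L$ small relative to $w$, where $c_\Delta\sqrt{L/w}$ may be below one. We handle these with \cref{lem:phase}. For $t\ge L^2\ge1/m$ it gives $\Pp(\Gap\ge1)\ge e^{-1}\ge1/8$ and $\E\Gap\ge e^{-1}$. It therefore suffices to check that $c_\Delta\sqrt{L/w}<1$ whenever $R<4\pi$, which holds since $L\le 4(4\pi+1)+4$ and $c_\Delta$ is tiny. When the transport argument applies, \cref{cor:transport-gap} already outputs both the expectation and the $1/8$-probability bound in the uniform form; its internal use of \cref{lem:phase} is consistent because $t\ge1/m$. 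For an invariant normalized law, we start from it and take $t\ge L^2$; the gap law is then time-invariant.
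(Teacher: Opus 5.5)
Your proof is correct and follows the paper's argument: the same pseudometric recording only cyclic distance, with distortion $D=\pi/2$, the shift by $\lfloor L/2\rfloor$ in the $C_L$ coordinate, the bounds $V_d(R)\le(2R+1)w$ and $m/N\ge1$, substitution into \eqref{eq:single-scale-graph}, and \cref{lem:phase} for small $L$. The differences are cosmetic. You normalize $F$ by $L/(2\pi)$ instead of $1/(2\sin(\pi/L))$, and you take $R=\lfloor\lfloor L/2\rfloor/2\rfloor$, so the transport regime is $L\ge52$, instead of $R=\lfloor L/6\rfloor$ with regime $L\ge120$. The constant check you call routine does go through, with more than a factor of five to spare; in the phase regime the threshold is far below $e^{-1}$, which is what the expectation bound actually needs rather than just $<1$.
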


\begin{proof}
Use the pseudometric that records only cyclic distance in the $C_L$ coordinate. The map
\[
 F(u,h)=\frac{e^{2\pi iu/L}}{2\sin(\pi/L)}
\]
is edge-Lipschitz with constant one and satisfies
\[
 d((u,h),(v,h'))\le\frac\pi2
 \norm{F(u,h)-F(v,h')}.
\]
A ball of radius $R<L/2$ has at most $w(2R+1)$ vertices. Pair $(u,h)$ with $(u+\lfloor L/2\rfloor,h)$. The average total allocation rate is
\[
 \frac mN=1+\frac{|E(H)|}{w}\ge1.
\]

For $L\ge120$, choose $R=\lfloor L/6\rfloor$. Then $R\ge L/7$, $R\ge4\pi$, and $2R+1\le L/2$. Substitution into \eqref{eq:single-scale-graph} gives at least
\[
 \frac{\sqrt2}{896\pi\sqrt{\Delta(\Delta+1)}}\sqrt{L/w},
\]
which is larger than the stated constant. The required horizon is at most $L^2$. For $3\le L<120$, the stated threshold is below $e^{-1}$, so \cref{lem:phase} applies. Starting from an invariant normalized law gives the stationary assertion.
\end{proof}

\subsection{Rectangular tori}

\begin{theorem}[Rectangular two-dimensional tori]\label{thm:rectangular-torus}
Let $3\le K\le L$. On $C_L\square C_K$, under every endpoint-local monotone rule and every initial law, for all $t\ge L^2$,
\[
 \E\Gap(X_t)\ge c_2\sqrt{L/K+\log K},
\]
and
\[
 \Pp\!\left(\Gap(X_t)\ge c_2\sqrt{L/K+\log K}\right)\ge\frac18,
\]
where
\[
 c_2=\frac1{1280\pi\sqrt{42}}.
\]
The same conclusions hold under every invariant normalized law.
\end{theorem}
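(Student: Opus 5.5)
Both terms come from one application of the transport-volume inequality, \cref{thm:transport-volume}, through \cref{cor:transport-gap}, with a scale-dependent radius $R(s)$. On $C_L\square C_K$ we have $N=LK$, $m=2N$ and $\Delta=4$, so $m/N=2$. The plan is to use the $\ell^\infty$ torus metric $d$, pair $(u,w)$ with $\psi(u,w)=(u+\lfloor L/2\rfloor,w)$, and separate two regimes of $R(s)$. For $R\le K/2$ the balls are two-dimensional, so $V_d(R)\le(2R+1)^2$. For $K/2<R<L/2$ the balls wrap in the short direction, so $V_d(R)\le K(2R+1)$. The separation $d(i,\psi(i))\ge\lfloor L/2\rfloor\ge 2R$ holds for all $R\le L/4$ (roughly).

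For the displacement tails I would use the Hilbert embedding of \cref{lem:hilbert-displacement},
\[
 F(u,w)=\Bigl(\frac{e^{2\pi iu/L}}{2\sin(\pi/L)},\frac{e^{2\pi iw/K}}{2\sin(\pi/K)}\Bigr).
\]
It is edge-Lipschitz with constant one, and each coordinate controls cyclic distance with factor $\pi/2$. Hence $d\le(\pi/2)\norm{F(\cdot)-F(\cdot)}$, and \eqref{eq:embedding-tail} holds with $D=\pi/2$ and $\Delta(\Delta+1)=20$. So $q(s)\le1/8$ whenever $R(s)^2\ge C s$ for an explicit constant $C$ (about $8\cdot(\pi^2/4)(40s+2)$) and $R(s)\ge R_0$, a fixed absolute constant.

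Next I would choose the profile $R(s)=\max\{R_0,\sqrt{Cs}\}$ for $0\le s\le T$, where $T$ is of order $L^2$ (the point where $\sqrt{CT}\approx L/4$). The integral $I=2\int_0^T V_d(R(s))^{-1}ds$ then splits into two pieces. The first is the range $s\lesssim K^2$, where the balls are two-dimensional and the integrand is $\gtrsim 1/(Cs)$ once $s\gtrsim R_0^2/C$. This range contributes $\gtrsim \log(K^2/R_0^2)\gtrsim\log K$ for $K$ large. The second is the range $K^2\lesssim s\le T$, where the integrand is $\gtrsim 1/(K\sqrt{Cs})$. This range contributes $\gtrsim \sqrt T/K\gtrsim L/K$, or an equivalent amount from the initial window when $L$ is comparable to $K$. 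Summing gives $I\gtrsim L/K+\log K$, and \cref{cor:transport-gap} (valid since $t\ge L^2\ge T$ and $t\ge 1/m$) gives both the expectation and the $1/8$-probability bounds at scale $\sqrt I/32$. For the stationary statement I would start from the invariant normalized law and take $t\ge L^2$, exactly as in the cycle theorem.

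The main obstacle is the bookkeeping of constants and small cases so that $c_2=(1280\pi\sqrt{42})^{-1}$ comes out. The logarithmic term needs $K$ beyond a threshold to dominate the $R_0$ cutoff, and $T$ must respect both $T\le L^2$ and the separation constraint $2R(T)\le\lfloor L/2\rfloor$. I would handle the regimes where $\sqrt{L/K+\log K}$ is bounded by an absolute constant times $c_2^{-1}$ via \cref{lem:phase}, since there the target threshold is below $e^{-1}$. I would also check that the $42$ in $c_2$ matches the $\Delta(\Delta+1)=20$ factor together with the ball-volume constants, possibly by choosing $R(s)$ as a slightly different multiple of $\sqrt s$.
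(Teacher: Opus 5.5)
Your proposal is correct and is essentially the paper's proof: one application of \cref{cor:transport-gap} with the direct sum of edge-normalized circles, the half-shift $\psi(u,w)=(u+\lfloor L/2\rfloor,w)$, a radius $R(s)\asymp\sqrt{s}$ up to a horizon $T\asymp L^2$, the volume bound $(2R+1)\min\{K,2R+1\}$, and \cref{lem:phase} whenever $L/K+\log K$ is bounded. The paper differs only in two respects. It uses graph distance, so $D=\pi/\sqrt2$ and $\E d(J_s,J_0)^2\le21\pi^2(s+1)$, which is the source of $42=2\cdot21$ in $c_2$. It also replaces your two regimes by the single substitution $r=\kappa\sqrt{s+1}$, which produces $\int dr/\min\{K,r\}$. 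Your $\ell^\infty$ metric with $D=\pi/2$ only helps: running the paper's computation with it gives the constant $\sqrt2\,c_2$, so there is nothing to match.
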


\begin{proof}
Embed each cyclic coordinate in its edge-normalized circle and take their Hilbert direct sum. The resulting map is edge-Lipschitz with constant one. If $d$ is graph distance, then
\[
 d(x,y)\le D\norm{F(x)-F(y)},
 \qquad D=\frac\pi{\sqrt2}.
\]
Indeed, in each coordinate the normalized chord is at least $2/\pi$ times cyclic distance, and $(a+b)^2\le2(a^2+b^2)$.

Here $\Delta=4$ and $m/N=2$. By \cref{lem:hilbert-displacement},
\[
 \E d(J_s,J_0)^2\le C(s+1),
 \qquad C=21\pi^2.
\]
Set
\[
 \kappa=4\sqrt C=4\pi\sqrt{21},
 \qquad
 R(s)=\lceil \kappa\sqrt{s+1}\rceil,
 \qquad
 T=\left(\frac{L}{16\kappa}\right)^2-1.
\]
When $L\ge16\kappa$, the radii are at most $L/8$ on $[0,T]$, and the displacement tail is at most $1/16$. Pair every vertex with its shift by $\lfloor L/2\rfloor$ in the long coordinate. A metric ball satisfies
\[
 V_d(R)\le(2R+1)\min\{K,2R+1\}.
 \tag{7.6}\label{eq:torus-volume}
\]
With $r=\kappa\sqrt{s+1}$, we have $2R(s)+1\le2r+3\le5r$, so
\begin{align}
 I
 &=2\int_0^T\frac{ds}{V_d(R(s))} \notag\\
 &\ge\frac4{25\kappa^2}
 \int_\kappa^{L/16}\frac{dr}{\min\{K,r\}}.
 \tag{7.7}\label{eq:torus-integral}
\end{align}

Write
\[
 Z=\frac LK+\log K.
\]
Since $K\le L$,
\[
 \int_1^L\frac{dr}{\min\{K,r\}}=Z-1.
\]
The integrand is nonincreasing. Rescaling the interval by $16$ and removing the bounded initial interval gives
\[
 \int_\kappa^{L/16}\frac{dr}{\min\{K,r\}}
 \ge\frac{Z}{16}-1-\kappa.
\]
Consequently
\[
 I\ge AZ-B,
 \qquad
 A=\frac1{100\kappa^2},
 \qquad
 B=\frac{4(1+\kappa)}{25\kappa^2}.
\]
If $Z\ge2B/A=32(1+\kappa)$, \cref{cor:transport-gap} gives the stated lower bounds because
\[
 \frac{\sqrt{A/2}}{32}
 =\frac1{320\sqrt2\,\kappa}
 =\frac1{1280\pi\sqrt{42}}.
\]
If $Z<32(1+\kappa)$, then $c_2\sqrt{Z}<e^{-1}$, and \cref{lem:phase} applies. The remaining case $L<16\kappa$ also satisfies
\[
 Z\le16\kappa/3+\log(16\kappa)<32(1+\kappa),
\]
so it is covered by the same phase argument. Starting from an invariant normalized law gives the stationary assertion.
\end{proof}

\begin{remark}[One family of terminal tests]\label{rem:one-family}
The logarithm in \cref{thm:rectangular-torus} is not obtained by summing separate variance bounds for different observable families. The permutation and the clipped terminal contrasts remain fixed. Only the protected radius $R(s)$ varies with the response lag in the single integral \eqref{eq:transport-volume}.
\end{remark}

For $K$ bounded, \cref{thm:rectangular-torus} recovers square-root growth in the long direction. For $K=L$, it gives $\Omega(\sqrt{\log L})$.

\section{Open problems}\label{sec:open}

The lower bound on the cycle has the expected saturated scale, but the corresponding upper bound for greedy graphical allocation remains open. The best known equilibrium upper bound is $O(n)$ \cite{OleskerTaylorSauerwaldZanetti2026}, so the stationary gap is presently known only between order $\sqrt n$ and order $n$.

\begin{problem}[Greedy cycle upper bound]
Does greedy allocation on $C_n$, started from the flat profile, satisfy
\[
 \sup_{t\ge0}\E\Gap(X_t)=O(\sqrt n)?
\]
Does every invariant law of the normalized process satisfy the corresponding stationary bound? A stationary estimate alone does not imply the uniform transient estimate without an additional convergence argument.
\end{problem}

The current method controls one averaged family of two-point contrasts. On a square torus this gives $\Omega(\sqrt{\log L})$, whereas the maximum of the two-dimensional discrete Gaussian free field is of order $\log L$ \cite{BramsonDingZeitouni2016}. For the different dynamic-averaging process, Kraizberg proved the sharp $O(\log L)$ expected upper bound on the same two-dimensional torus \cite{Kraizberg2026}. Reaching the $\log L$ scale for endpoint-local allocation would require joint fluctuation information for many separated regions rather than a single averaged variance budget.

\begin{problem}[From two-point roughness to a maximum]
Can endpoint-local monotone allocation on $C_L\square C_L$ be shown to have a gap of order at least $\log L$ at diffusive times? More generally, can the tagged-response method be combined with a Sudakov-type or multiregion anti-concentration principle without assuming Gaussianity or independence?
\end{problem}

Finally, a rule that reads loads in a bounded neighborhood need not preserve one unit discrepancy: an off-edge perturbation may change a remote endpoint decision and create a signed discrepancy with several nonzero coordinates. The obstruction is therefore not captured by a single tag.

\begin{problem}[Bounded-radius information]
Find a replacement for the unit-discrepancy tag for rules whose decision on an edge may inspect loads within graph distance $r_0$, where $r_0$ is fixed. Does every such monotone rule still incur a polynomial gap on the cycle, and what exponent is forced by the propagation of its signed response?
\end{problem}

\begingroup
\hbadness=10000

\endgroup

\end{document}